\theoremstyle{plain}
\newtheorem{theorem}{Theorem}[section]
\newtheorem{prop}[theorem]{Proposition}
\newtheorem{cor}{Corollary}[theorem]
\newtheorem{lemma}{Lemma}[section]
\theoremstyle{definition}
\newtheorem{remark}{Remark}[section]
\newtheorem*{acknowledgement}{\textnormal{\textbf{Acknowledgements}}}
\begin{document}
\title[Orthogonality of operators on complex Banach spaces]{Orthogonality of bounded linear operators on complex Banach spaces}
\author[ Kallol Paul, Debmalya Sain,  Arpita Mal and Kalidas Mandal ]{ Kallol Paul, Debmalya Sain, Arpita Mal and Kalidas Mandal}

\newcommand{\acr}{\newline\indent}

\address[Paul]{Department of Mathematics\\ Jadavpur University\\ Kolkata 700032\\ West Bengal\\ INDIA}
\email{kalloldada@gmail.com}

\address[Sain]{Department of Mathematics\\ Indian Institute of Science\\ Bengaluru 560012\\ Karnataka \\India\\ }
\email{saindebmalya@gmail.com}

\address[Mal]{Department of Mathematics\\ Jadavpur University\\ Kolkata 700032\\ West Bengal\\ INDIA}
\email{arpitamalju@gmail.com}

\address[Mandal]{Department of Mathematics\\ Jadavpur University\\ Kolkata 700032\\ West Bengal\\ INDIA}
\email{kalidas.mandal14@gmail.com}

\thanks{ The research of the second author is sponsored by Dr. D. S. Kothari Postdoctoral fellowship. The third and fourth author would like to thank UGC and CSIR, Govt. of India respectively for the financial support.} 

\subjclass[2010]{Primary 46B20, Secondary 47L05}
\keywords{Birkhoff-James orthogonality; Complex Banach space; Bounded linear operator}

\begin{abstract}
We study Birkhoff-James orthogonality of bounded linear operators on complex Banach spaces and obtain a complete characterization of the same. By means of introducing new definitions, we illustrate that it is possible in the complex case, to develop a study of orthogonality of bounded (compact) linear operators, analogous to the real case. Furthermore, earlier operator theoretic characterizations of Birkhoff-James orthogonality in the real case, can be obtained as simple corollaries to our present study. In fact, we obtain more than one equivalent characterizations of Birkhoff-James orthogonality of compact linear operators in the complex case, in order to distinguish the complex case from the real case. We also study the left symmetric linear operators on complex two-dimensional $l_p$ spaces. We prove that $ T $  is a left symmetric linear operator on $ \ell_p^2{(\mathbb{C})}$ if and only if $ T $ is the zero operator. 
\end{abstract}

\maketitle
\section{Introduction.} 

The notion of Birkhoff-James orthogonality (B-J orthogonality) plays a very important role in the geometry of Banach spaces. In \cite{J},  James illustrated the role of  B-J orthogonality in characterizing geometric properties like smoothness, strict convexity etc. of the space. It is quite straightforward to observe that the notion of B-J orthogonality extends to the space of all bounded linear operators on a Banach space.  The role of B-J orthogonality in the study of geometry of Banach spaces has been explored by several researchers, from various points of view. We refer the readers to \cite{C,CSW,K,T}, and the references therein, for a detailed study in this regard. Recently, in \cite{S}, Sain characterized B-J orthogonality of linear operators on finite-dimensional real Banach spaces. Although B-J orthogonality can be defined for either real or complex Banach spaces, till now most of the operator theoretic study of B-J orthogonality \cite{S,SP} has been conducted exclusively in the context of real Banach spaces. In this paper, our aim is to initiate an analogous study of B-J orthogonality of linear operators in the complex case and to obtain a characterization of the same. It is interesting to observe that the results already known in the context of real Banach spaces follow quite easily from these new results. It is in this sense, that our present study can be considered as an extension of our earlier studies \cite{S,SGP}. Without further ado, let us establish the relevant notations and terminologies to be used throughout the paper.

\medskip

Let $\mathbb{X},~ \mathbb{Y}$ be complex Banach spaces. Let $B_{\mathbb{X}} = \{x \in \mathbb{X} \colon \|x\| \leq 1\}$ and
$S_{\mathbb{X}} = \{x \in \mathbb{X} \colon \|x\|=1\}$ be the unit ball and the unit sphere of $\mathbb{X}$, respectively. Let $\mathbb{L}(\mathbb{X},\mathbb{Y})(\mathbb{K}(\mathbb{X},\mathbb{Y}))$ denote the Banach space of all bounded (compact) linear operators from $\mathbb{X}$ to $\mathbb{Y}$, endowed with the usual operator norm. We write $\mathbb{L}(\mathbb{\mathbb{X}, \mathbb{Y}})= \mathbb{L}(\mathbb{X})$ and $\mathbb{K}(\mathbb{X},\mathbb{Y})=\mathbb{K}(\mathbb{X})$ if $\mathbb{X}= \mathbb{Y}$.

For any two elements  $x,y \in {\mathbb{X}}$, $x$ is said to be B-J orthogonal to $y$, written as $x \perp_B y$, if $ \|x+\lambda y\|\geq\|x\|$
$\forall \lambda \in \mathbb{C}$.

Similarly, for any two elements  $T,A \in \mathbb{L}(\mathbb{X})$,
$T$ is said to be B-J orthogonal to $A$, written as $T \perp_B A$,
if $ \|T + \lambda A\| \geq \|T\|$ $\forall \lambda \in  \mathbb{C}$. 

For a linear operator $T$ defined on a Banach space $\mathbb{X}$, let $M_T$ denote the collection of all unit vectors in $\mathbb{X}$ at which $T$ attains norm, i.e,
\[ M_T= \{  x \in S_ \mathbb{X} \colon\|Tx\| = \|T\| \}. \]

In order to characterize B-J orthogonality of bounded linear operators on finite-dimensional real Banach spaces, Sain \cite{S} introduced the notions of $x^+ $ and $x^-$ in the following way: \\
For any two elements $ x, y $ in a real Banach space $ \mathbb{X}, $ let us say that $ y \in x^{+} $ if $ \| x + \lambda y \| \geq \| x \| $ for all $ \lambda \geq 0. $ Following similar motivations, we say that $ y \in x^{-} $ if $ \| x + \lambda y \| \geq \| x \| $ for all $ \lambda \leq 0. $ Using these notions, Sain \cite{S} characterized B-J orthogonality of linear operators defined on finite-dimensional real Banach spaces. 

\begin{theorem}[Theorem 2.2 of \cite{S}]
	Let $ \mathbb{X} $ be a finite-dimensional real Banach space. Let $T, A \in \mathbb{L}(\mathbb{X}).$ Then  $T \perp_B A $  if and only if there exists $x, y \in M_T $ such that $ Ax \in Tx^{+} $ and $ Ay \in Ty^{-} $.
\end{theorem}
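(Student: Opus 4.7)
The plan is to prove the two directions separately. The sufficiency is the easy direction. Assume there exist $x, y \in M_T$ with $Ax \in Tx^+$ and $Ay \in Ty^-$. For any $\lambda \geq 0$, evaluate $T + \lambda A$ at $x$: by the definition of $Tx^+$, we have $\|Tx + \lambda Ax\| \geq \|Tx\| = \|T\|$, and since $\|T+\lambda A\| \geq \|(T+\lambda A)x\|$, we obtain $\|T + \lambda A\| \geq \|T\|$. For $\lambda \leq 0$, do the analogous computation at $y$, using $Ay \in Ty^-$. Combining, $\|T + \lambda A\| \geq \|T\|$ for every $\lambda \in \mathbb{R}$, which is exactly $T \perp_B A$.

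For the necessity, I would exploit finite-dimensionality twice, first for vectors in $S_\mathbb{X}$ and then for norming functionals in $S_{\mathbb{X}^*}$. Pick a sequence $\lambda_n \to 0^+$. Because $\mathbb{X}$ is finite-dimensional, for each $n$ there exists $x_n \in S_\mathbb{X}$ with $\|(T+\lambda_n A)x_n\| = \|T + \lambda_n A\|$, and the latter is $\geq \|T\|$ by the hypothesis $T \perp_B A$. Passing to a subsequence, $x_n \to x \in S_\mathbb{X}$; a short estimate $\|(T+\lambda_n A)x_n - Tx\| \leq \|Tx_n - Tx\| + \lambda_n \|A\|$ shows that $\|(T+\lambda_n A)x_n\| \to \|Tx\|$, and consequently $\|Tx\| \geq \|T\|$, so $x \in M_T$.

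To upgrade $x \in M_T$ into the statement $Ax \in Tx^+$, I would pick, for each $n$, a Hahn--Banach norming functional $\phi_n \in S_{\mathbb{X}^*}$ so that $\phi_n((T+\lambda_n A)x_n) = \|(T+\lambda_n A)x_n\|$. Using finite-dimensionality of $\mathbb{X}^*$ and passing to a further subsequence, $\phi_n \to \phi \in S_{\mathbb{X}^*}$. Continuity then gives $\phi(Tx) = \|Tx\| = \|T\|$. The decisive inequality is
\[
\phi_n(Tx_n) + \lambda_n \phi_n(A x_n) = \phi_n((T+\lambda_n A)x_n) \geq \|T\| \geq \phi_n(Tx_n),
\]
from which $\phi_n(Ax_n) \geq 0$, and so $\phi(Ax) \geq 0$ in the limit. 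Hence, for any $\mu \geq 0$,
\[
\|Tx + \mu Ax\| \geq \phi(Tx + \mu Ax) = \|T\| + \mu \phi(Ax) \geq \|T\| = \|Tx\|,
\]
proving $Ax \in Tx^+$. Running the identical argument with $\lambda_n \to 0^-$ produces $y \in M_T$ with $\phi'(Ay) \leq 0$, yielding $Ay \in Ty^-$.

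The main obstacle is the second compactness extraction and the sign argument for $\phi(Ax)$, since one has to package together the attainment of the norm on $S_\mathbb{X}$, the Hahn--Banach norming functional, and the side-condition from $\lambda_n > 0$ to conclude directional non-negativity. Everything else is either continuity or a mechanical separation into $\lambda \geq 0$ and $\lambda \leq 0$ cases; and nothing in this argument uses complex scalars, so it is genuinely the real-scalar characterization that is being recovered.
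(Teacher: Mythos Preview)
Your proof is correct, but it takes a genuinely different route from the paper's. In this paper the real finite-dimensional statement is not proved directly at all; it is recovered as Corollary~\ref{corollary:ortho real} by first establishing the complex characterization for arbitrary bounded operators (Theorem~\ref{theorem:bounded}), then specializing to the reflexive/compact setting (Theorem~\ref{theorem:reflexive}), and finally observing that over~$\mathbb{R}$ the only direction is $\alpha=1$, so $x_\alpha^{\pm}$ collapse to $x^{\pm}$. The paper's argument for Theorem~\ref{theorem:bounded} works purely with norm inequalities and convexity (the $\epsilon_n$-approximate positivity and the interpolation step $Tx_n+\frac{\alpha}{n}Ax_n=(1-\frac{1}{nt})Tx_n+\frac{1}{nt}(Tx_n+t\alpha Ax_n)$), with no dualization whatsoever.

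Your approach instead dualizes: you extract a limit $x\in M_T$ by compactness in $S_{\mathbb{X}}$, then extract a limit norming functional $\phi$ by compactness in $S_{\mathbb{X}^*}$, and the sign condition $\phi(Ax)\geq 0$ (coming from $\lambda_n>0$) delivers $Ax\in Tx^{+}$ in one line. This is shorter and more transparent for the specific finite-dimensional real statement, and it isolates the mechanism cleanly. The paper's detour through the complex case is intentional, however: its purpose is precisely to show that the earlier real results fall out as trivial specializations of the new complex machinery, and the functional-free method is what lets the argument survive in the non-reflexive, non-compact generality of Theorem~\ref{theorem:bounded}.
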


The characterization of B-J orthogonality of bounded linear operators on finite-dimensional real Banach spaces was later extended to real Banach spaces of arbitrary dimension in \cite{SPM}. The following two notions were introduced in \cite{SPM}, in order to accomplish the goal.\\
For any two elements $x,~y$ in a real Banach space $\mathbb{X}$ and $\epsilon \in [0,1)$, we say that $y \in x^{+ (\epsilon)}$ if $\|x+ \lambda y\| \geq \sqrt{1-{\epsilon}^2}\|x\|$ for all $\lambda \geq 0$. Similarly, we say that  $y \in x^{- (\epsilon)}$ if $\|x+ \lambda y\| \geq \sqrt{1-{\epsilon}^2}\|x\|$ for all $\lambda \leq 0$.\\
 In this paper, in order to obtain an analogous result for complex Banach spaces, let us introduce the following notions:  \\ 

Let  $ x \in \mathbb{X}$ and $ U = \{ \alpha \in \mathbb{C} : | \alpha | = 1, ~\arg \alpha \in [0,\pi) \}.$ For $ \alpha \in U $  and $\epsilon \in [0,1)$ define 

\[ x_\alpha^{+}=\{y \in\mathbb{X}: \|x+\lambda y\|\geq\|x\|~ \forall~ \lambda = t\alpha,  t\geq 0 \}. \]
\[ x_\alpha^{-}=\{y \in\mathbb{X}:\|x+\lambda y\|\geq\|x\|~ \forall~ \lambda = t\alpha,  t\leq 0 \}.\] 
\[ x_\alpha^{\perp}  =  \{y \in\mathbb{X}:\|x+\lambda y\|\geq\|x\|~ \forall~ \lambda = t\alpha,   t\in\mathbb{R}\}.\]
\[ x_\alpha^{+(\epsilon)}=\{y \in\mathbb{X}: \|x+\lambda y\|\geq \sqrt{1-{\epsilon}^2}\|x\|~ \forall~ \lambda = t\alpha,  t\geq 0 \} .\]
\[ x_\alpha^{-(\epsilon)}=\{y \in\mathbb{X}:\|x+\lambda y\|\geq \sqrt{1-{\epsilon}^2}\|x\|~ \forall~ \lambda = t\alpha,  t\leq 0 \}.\] 
\[ x_\alpha^{\perp (\epsilon)}  =  \{y \in\mathbb{X}:\|x+\lambda y\|\geq \sqrt{1-{\epsilon}^2}\|x\|~ \forall~ \lambda = t\alpha,   t\in\mathbb{R}\}.\]

If $ \beta = e^{i\pi} \alpha $ then we define $ x_\beta^{+} = x_\alpha^{-}, ~ x_\beta^{-} = x_\alpha^{+} $, $x_\beta^{\perp} = x_\alpha^{\perp}$, $ x_\beta^{+(\epsilon)} = x_\alpha^{-(\epsilon)},~ x_\beta^{-(\epsilon)} = x_\alpha^{+(\epsilon)} $ and $x_\beta^{\perp(\epsilon)} = x_\alpha^{\perp(\epsilon)}$. \\
If $ y \in x_\alpha^{\perp} $ then we write $ x \bot_{\alpha} y.$  Let us define the notions of $x^+$, $x^-$ and $x^{\perp}$ in a complex Banach space in the following way:
 \[x^{+}   =  \bigcap \{ x_\alpha^{+} : \alpha \in U \} .\]
 \[x^{-}  =  \bigcap \{ x_\alpha^{-} : \alpha\in U \} .\]
 \[x^{\perp}  =  \bigcap \{ x_\alpha^{\bot} : \alpha\in U \} .\]
 If the space $\mathbb{X}$ is a real Banach space, then we must have, $\alpha \in U$ implies that $\alpha=1$. Therefore, $ x_\alpha^{+} = x^+, $  $ x_\alpha^{-} = x^- $  and $ x_\alpha^{\perp} = x^{\bot}.$
 
\smallskip
In this paper, we completely characterize B-J orthogonality of bounded linear operators between complex Banach spaces. The characterization assumes a particularly nice form if the domain space is reflexive and the operators are compact. In order to illustrate the importance of our study, we show that earlier characterizations of operator B-J orthogonality \cite{S,SPM} in the real case follow as simple corollaries to our present study. Next we consider the left symmetry of B-J orthogonality of linear operators defined on a finite-dimensional complex Banach space $ \mathbb{X}. $ For an element $ x \in  \mathbb{X}, ~ x $ is said to be left symmetric (with respect to B-J orthogonality) if $ x \perp_{B} y $ implies $ y \perp_{B} x $ for any $ y \in \mathbb{X}.$ A study of left symmetric linear operators on a finite-dimensional real Banach space was carried out in \cite{SGP}. In this paper, we study the left symmetric linear operators on complex two-dimensional $l_p$ spaces. We prove that $ T $  is a left symmetric linear operator on $ \ell_p^2{(\mathbb{C})}$ if and only if $ T $ is the zero operator. 

\section{Main Results}   
Let us begin with two easy propositions, that would be useful in obtaining the desired characterization of B-J orthogonality of bounded linear operators between complex Banach spaces.

\begin{prop}\label{prop:prop1 } 
Let $\mathbb{X}$ be a complex Banach space, $x, y \in \mathbb{X} $ and $ \alpha \in U.$  Then the following are true:
	
	(i) Either $y \in x_{\alpha}^{+}$ or $y \in x_{\alpha}^{-}$.
	
	(ii)  $x\perp_{\alpha} y$ if and only if  $y \in x_\alpha^{+}$ and $y \in x_\alpha^{-}$. 
	
	(iii)  $y \in x_\alpha^{+}$ implies that $\eta y \in (\mu x)_\alpha^{+}$ for all $\eta, \mu>0$. 	
	
	(iv)  $y \in x_\alpha^{+}$ implies that $ - y \in  x_\alpha^{-}$ and $y\in(-x)_\alpha^{-}$. 
	
	(v)  $y \in x_\alpha^{-}$ implies that $\eta y \in (\mu x)_\alpha^{-}$ for all $\eta, \mu>0$. 
	
	(vi)   $y \in x_\alpha^{-}$ implies that $ - y \in  x_\alpha^{+}$ and $y\in(-x)_\alpha^{+}$. 
	
	(vii) $y \in x_\alpha^{+}$ implies that $\beta y \in (\beta x)_\alpha^{+}$ for all $\beta \in \mathbb{C}$.
	
	(viii) $y \in x_\alpha^{-}$ implies that $\beta y \in (\beta x)_\alpha^{-}$ for all $\beta \in \mathbb{C}$.
\end{prop}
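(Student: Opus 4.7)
The plan is to dispose of parts (ii)--(viii) by direct computation from the definitions, using only the triangle inequality and the homogeneity of the norm, and to treat part (i) as the only genuinely non-routine step via a convexity argument.

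For (i), fix $\alpha \in U$ and define $g : \mathbb{R} \to \mathbb{R}$ by $g(t) = \|x + t\alpha y\|$. The triangle inequality makes $g$ convex. Suppose, toward a contradiction, that neither $y \in x_\alpha^{+}$ nor $y \in x_\alpha^{-}$ holds; then there exist $t_1 > 0$ and $t_2 < 0$ with $g(t_1), g(t_2) < \|x\| = g(0)$. Writing $0 = \lambda t_1 + (1-\lambda) t_2$ with $\lambda = -t_2/(t_1 - t_2) \in (0,1)$ and applying convexity gives $g(0) \le \lambda g(t_1) + (1-\lambda) g(t_2) < g(0)$, a contradiction. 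Hence one of $y \in x_\alpha^{+}$, $y \in x_\alpha^{-}$ must hold.

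For (ii), the condition $x \perp_{\alpha} y$ is, by definition, $\|x + t\alpha y\| \ge \|x\|$ for every real $t$, which is exactly the conjunction of the $t \ge 0$ and $t \le 0$ cases, i.e.\ $y \in x_\alpha^{+}$ and $y \in x_\alpha^{-}$. For (iii) and (v), given $\eta, \mu > 0$, the identity $\|\mu x + s \alpha \eta y\| = \mu\,\|x + (s\eta/\mu)\alpha y\|$ transports the defining inequality unchanged, since $s\eta/\mu$ has the same sign as $s$. For (iv) and (vi), the substitutions $y \mapsto -y$ and $x \mapsto -x$ each reduce to the identity $\|x + t\alpha(-y)\| = \|x + (-t)\alpha y\|$ (and its analogue with $-x$), which swaps the ranges $t \ge 0$ and $t \le 0$ and therefore swaps the $+$ and $-$ conditions. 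Finally (vii) and (viii) follow from complex homogeneity: for any $\beta \in \mathbb{C}$, $\|\beta x + t\alpha\beta y\| = |\beta|\,\|x + t\alpha y\| \ge |\beta|\,\|x\| = \|\beta x\|$, valid on whichever half-line $t$ ranges over.

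The only conceptual ingredient in the entire proposition is the convexity of the real-variable norm function exploited in (i); the remaining parts are essentially bookkeeping, and I do not foresee a real technical obstacle.
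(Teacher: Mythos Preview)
Your proof is correct and follows essentially the same approach as the paper: the paper also handles (i) by the convexity of $t\mapsto\|x+t\alpha y\|$ (phrased as writing $x$ as a convex combination of $x+\lambda_0 y$ and $x+\lambda y$), and it dismisses (ii)--(viii) as routine verifications from the definitions, just as you do.
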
	
\begin{proof}
$(i)$ If $ y \notin x_\alpha^+ $ then we show that $ y \in x_{\alpha}^-.$ Since $ y \notin x_\alpha^+ $ we have $\|x+\lambda_{0} y\|<\|x\|$ for some $\lambda_{0}=t_{0}\alpha$ with $t_{0}>0$. Let  $\lambda =t \alpha$ with $t< 0.$ Then there exists $ s \in [0,1] $ such that 
	$x= s(x+\lambda_{0}y) + (1-s)(x+\lambda y) \Rightarrow \|x\|\leq s\|x+\lambda_{0}y\| +(1-s)\|x+\lambda y\| \Rightarrow  \|x\|< s\|x\| +(1-s)\|x+\lambda y\| \Rightarrow \|x\|< \|x+\lambda y\|$. 	 Therefore,  $ \|x\|\leq \|x+\lambda y\|  ~\forall \lambda=t\alpha$ with $t\leq 0 \Rightarrow y\in x_{\alpha}^-$. \\
	
The proofs of $(ii)-(viii)$ can be easily completed using similar approach.
\end{proof}

\begin{prop}\label{prop:prop2 } 
Let $\mathbb{X}$ be a complex Banach space and $x, y \in \mathbb{X}$. Then the following are true:

	(i)  	$x\perp_{B} y$ if and only if  $y \in x^{+}$ and $y \in x^{-}$.
	
	(ii)  $y \in x^{+}$ implies that $\eta y \in (\mu x)^{+}$ for all $\eta, \mu>0$.
	
	(iii)  $y \in x^{+}$ implies that $ - y \in  x^{-}$ and $y\in(-x)^{-}$.
	
	(iv)  $y \in x^{-}$ implies that $\eta y \in (\mu x)^{-}$ for all $\eta, \mu>0$.
	
	(v)   $y \in x^{-}$ implies that $ - y \in  x^{+}$ and $y\in(-x)^{+}$.
\end{prop}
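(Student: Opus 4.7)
The plan is to derive Proposition 2 directly from Proposition 1 by intersecting over $\alpha \in U$. The only non-bookkeeping step is part (i), where I must reconcile the two-sided cone definition of $x^+, x^-$ with the fact that $\lambda$ ranges over all of $\mathbb{C}$ in Birkhoff-James orthogonality.

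For part (i), the key observation is that every $\lambda \in \mathbb{C}$ can be written as $\lambda = t\alpha$ with $\alpha \in U$ and $t \in \mathbb{R}$: indeed, writing $\lambda = r e^{i\theta}$ with $r \geq 0$, if $\theta \in [0, \pi)$ take $\alpha = e^{i\theta} \in U$ and $t = r \geq 0$, whereas if $\theta \in [\pi, 2\pi)$ take $\alpha = e^{i(\theta - \pi)} \in U$ and $t = -r \leq 0$. Using Proposition 1(ii), for each fixed $\alpha \in U$ the condition $y \in x_\alpha^+ \cap x_\alpha^-$ is equivalent to $\|x + t\alpha y\| \geq \|x\|$ for every $t \in \mathbb{R}$. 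Hence $y \in x^+ \cap x^- = \bigcap_{\alpha \in U}(x_\alpha^+ \cap x_\alpha^-)$ is equivalent to $\|x + \lambda y\| \geq \|x\|$ for every $\lambda$ of the form $t\alpha$ with $\alpha \in U$, $t \in \mathbb{R}$, which by the observation above is the same as $\|x+\lambda y\|\geq\|x\|$ for every $\lambda \in \mathbb{C}$, i.e.\ $x \perp_B y$.

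For parts (ii)--(v), I would simply apply the corresponding clauses of Proposition 1 pointwise in $\alpha$. For instance, if $y \in x^+$ then $y \in x_\alpha^+$ for every $\alpha \in U$, so by Proposition 1(iii) we get $\eta y \in (\mu x)_\alpha^+$ for every $\alpha \in U$, whence $\eta y \in (\mu x)^+$; this gives (ii). Part (iv) is the analogous consequence of Proposition 1(v). For (iii), from $y \in x_\alpha^+$ (all $\alpha \in U$) and Proposition 1(iv) we obtain both $-y \in x_\alpha^-$ and $y \in (-x)_\alpha^-$ for every $\alpha \in U$, so $-y \in x^-$ and $y \in (-x)^-$; part (v) follows analogously from Proposition 1(vi).

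There is no real obstacle here — each item reduces to an $\alpha$-by-$\alpha$ application of Proposition 1 followed by intersecting over $\alpha \in U$, the only mild subtlety being the parametrization argument in (i). I would write the proof as a single short paragraph for (i) and then one or two lines per item for (ii)--(v), essentially of the form ``apply Proposition 1(\emph{k}) for each $\alpha \in U$ and intersect''.
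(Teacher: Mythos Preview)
Your proposal is correct and follows essentially the same approach as the paper: both argue $\alpha$-by-$\alpha$ and then intersect over $U$. The only cosmetic difference is that the paper redoes the elementary norm computation in (ii)--(iii) in place (e.g.\ $\|\mu x + t\alpha\,\eta y\| = |\mu|\,\|x + (t\eta/\mu)\alpha y\| \geq \|\mu x\|$) rather than citing the corresponding clauses of Proposition~\ref{prop:prop1 }, and for (i) the paper simply says ``follows from the definitions'' where you spell out the $\lambda = t\alpha$ parametrization explicitly.
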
	
\begin{proof}

	 	$ (i)$ The proof follows from the definitions of $x^+ $ and $x^-$.

	 $(ii)$ Let $y \in x^+$. Then $y \in x_{\alpha}^+$ for each $\alpha $ with $ \arg \alpha \in [0, \pi).$ We show that $ \eta y \in (\mu x)_{\alpha}^+ $ for each $\alpha $ with $ \arg \alpha \in [0, \pi).$  Now, $\|\mu x + (t \alpha) \eta y\|= |\mu|\|x+    (\frac{t \eta}{\mu}) \alpha y\| \geq|\mu| \|x\|= \|\mu x\|~ \forall ~t,\mu, \eta> 0$  and so $ \eta y\in (\mu x)_\alpha^{+} ~\forall ~ \mu , \eta> 0$. 	Thus, $ \eta y \in (\mu x)^+   ~\forall ~ \mu , \eta> 0$.
	  
	  $(iii)$ Suppose $y \in x^{+}$. Then for each $\alpha $ with $ \arg \alpha \in [0,\pi), ~\|x+ t \alpha y\|\geq\|x\| ~\forall~ t\geq 0.$ So $ \|x+ (-t) \alpha (-y)\| \geq \|x\| ~\forall~ t \geq 0. $ This shows that $-y \in x_{\alpha}^{-}$ for each $\alpha $ with $ \arg \alpha \in [0,\pi)$ and so $ -y \in x^{-}$.
	  
	  Again, for each $\alpha $ with $ \arg \alpha \in [0,\pi), ~\|x+ t \alpha y\|=\|(-x) + (-t) \alpha (y)\|\geq\|x\|=\|-x\| ~\forall~ t\geq 0$.   This shows that $y \in (-x)_{\alpha}^{-}$ for each $\alpha $ with $ \arg \alpha \in [0,\pi)$ and therefore, $ y \in (-x)^{-}$.
	  
	  $(iv)$ Follows similarly as $(ii).$
	  
	 $ (v)$ Follows similarly as $(iii).$
\end{proof}	

Let us now obtain the promised characterization theorem.

\begin{theorem}\label{theorem:bounded}
	Let $\mathbb{X}$ and $\mathbb{Y}$ be two complex Banach spaces. Let $T \in \mathbb{L} (\mathbb{X}, \mathbb{Y})$ be non-zero. Then for  any $A \in \mathbb{L} (\mathbb{X}, \mathbb{Y}) ,~ T \bot_B A$ if and only if either $(a)$ or  $(b)$ holds: \\
	$(a)$ There exists a sequence $\{x_n\}$ in $S_\mathbb{X}$ such that $\| Tx_n\| \rightarrow \|T\|$ and $\|Ax_n\| \rightarrow 0$ as $n\rightarrow \infty$. \\
	$(b)$ For each $\alpha \in U$, there exist two sequences $\{x_n= x_n(\alpha)\},~\{y_n=y_n(\alpha)\}$ in $S_\mathbb{X}$ and two sequences of positive real numbers $\{\epsilon_n=\epsilon_n(\alpha)\}$, $\{\delta_n=\delta_n(\alpha)\}$ such that \\ 
	(i) $ \epsilon_n \rightarrow 0$, $\delta_n\rightarrow 0$ as $n\rightarrow \infty$,\\
	(ii) $\| Tx_n\| \rightarrow \|T\|$ and $\|Ty_n\|\rightarrow\|T\|$ as $n\rightarrow \infty$,\\
	(iii) $Ax_n\in(Tx_n)^{+(\epsilon_n)}_{\alpha}$ and $Ay_n\in(Ty_n)^{-(\delta_n)}_{\alpha}$ for all  $n\in \mathbb{N}$.
\end{theorem}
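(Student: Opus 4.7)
My plan is to split the statement into its sufficiency and necessity directions. Sufficiency follows from elementary estimates, while necessity reduces, for each fixed $\alpha \in U$, to the real-case characterization of \cite{SPM} applied to $T$ and $\alpha A$ regarded as $\mathbb{R}$-linear operators between the real Banach spaces underlying $\mathbb{X}$ and $\mathbb{Y}$. For the sufficiency half, if $(a)$ holds then for every $\lambda \in \mathbb{C}$,
$$\|T+\lambda A\| \geq \|(T+\lambda A)x_n\| \geq \|Tx_n\| - |\lambda|\,\|Ax_n\| \longrightarrow \|T\|.$$
If instead $(b)$ holds, any nonzero $\lambda \in \mathbb{C}$ factors as $\lambda = t\alpha$ with $\alpha \in U$ and $t \in \mathbb{R}$; one employs $\{x_n(\alpha)\}$ when $t \geq 0$ and $\{y_n(\alpha)\}$ when $t < 0$, and the defining inequality of $(Tx_n)^{+(\epsilon_n)}_\alpha$ (respectively $(Ty_n)^{-(\delta_n)}_\alpha$) forces $\|T+\lambda A\| \geq \sqrt{1-\epsilon_n^2}\,\|Tx_n\| \longrightarrow \|T\|$.

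For necessity, assume $T \perp_B A$ and that $(a)$ fails. Fix $\alpha \in U$. The hypothesis gives $\|T + t\alpha A\| \geq \|T\|$ for every $t \in \mathbb{R}$; viewing $\mathbb{X}$ and $\mathbb{Y}$ as real Banach spaces (their norms are unchanged), this is precisely the real B-J orthogonality of $T$ to $\alpha A$. The main theorem of \cite{SPM}, applied to this real-linear pair, produces either a sequence $\{z_n\} \subset S_\mathbb{X}$ with $\|Tz_n\| \to \|T\|$ and $\|\alpha A z_n\| = \|A z_n\| \to 0$---which is exactly $(a)$ and has been excluded---or sequences $\{x_n\},\{y_n\} \subset S_\mathbb{X}$ and positive scalars $\epsilon_n, \delta_n \to 0$ with $\|Tx_n\|, \|Ty_n\| \to \|T\|$, $\|Tx_n + t\alpha A x_n\| \geq \sqrt{1-\epsilon_n^2}\,\|Tx_n\|$ for all $t \geq 0$, and $\|Ty_n + t\alpha A y_n\| \geq \sqrt{1-\delta_n^2}\,\|Ty_n\|$ for all $t \leq 0$. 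These conditions read off verbatim as $Ax_n \in (Tx_n)^{+(\epsilon_n)}_\alpha$ and $Ay_n \in (Ty_n)^{-(\delta_n)}_\alpha$, and since $\alpha \in U$ was arbitrary, $(b)$ holds.

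The main obstacle is encapsulated inside the invoked real-case theorem from \cite{SPM}: one must construct near-norming sequences $\{x_n\}, \{y_n\}$ for $T$ that additionally satisfy a \emph{uniform} one-sided lower bound in $t$. A naive $x_n$ that nearly maximizes $\|(T + s_n \alpha A)x\|$ at a single $s_n \to 0^+$ controls the norm only at that particular $s_n$, while the convex function $t \mapsto \|Tx_n + t\alpha A x_n\|$ may dip below $\|Tx_n\|$ elsewhere on $[0,\infty)$. The careful extraction that avoids this, carried out in arbitrary-dimensional generality in \cite{SPM}, is what we import; our remaining task is then only to translate its conclusion into the complex notation $(Tx)^{+(\epsilon)}_\alpha$, $(Ty)^{-(\delta)}_\alpha$ introduced in the preceding propositions.
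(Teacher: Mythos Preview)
Your proof is correct, but it is organized differently from the paper's. For sufficiency the two arguments coincide. For necessity, the paper does \emph{not} invoke Theorem~2.4 of \cite{SPM} as a black box; instead it reproduces that construction in full (choosing $x_n$ to nearly maximize $\|(T+\tfrac{\alpha}{n}A)x\|$ beyond $\|T\|-\tfrac{1}{n^3}$, setting $\epsilon_n=\sqrt{1-(1-\tfrac{1}{n\|Tx_n\|})^2}$, and splitting the verification of $Ax_n\in(Tx_n)^{+(\epsilon_n)}_\alpha$ into the ranges $0\le t<\tfrac1n$, $\tfrac1n\le t\le n$, and $t>\tfrac{2\|T\|}{c}$). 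Your route---restricting scalars to $\mathbb{R}$, observing that $T\perp_B A$ in the complex sense forces $T\perp_B \alpha A$ in the real sense for every $\alpha\in U$, and then importing the \cite{SPM} conclusion verbatim---is cleaner and avoids rewriting that argument. The trade-off is purely expository: the paper wants the real-case theorem to appear as a \emph{corollary} of the complex one (Corollary~2.3.1), so it must give a self-contained proof; your reduction reverses the logical dependence, making the complex theorem rest on the real one. Mathematically there is no gap in your argument, since the underlying real Banach space of a complex one has the same norm, unit sphere, and operator norms, so \cite{SPM} applies without modification to the $\mathbb{R}$-linear pair $(T,\alpha A)$.
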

\begin{proof}
	We first prove the easier sufficient part.\\
	Suppose $(a)$ holds. Now, for any  $\lambda \in \mathbb{C}$, $\|T + \lambda A\| \geq \|Tx_n + \lambda Ax_n\|\geq \|Tx_n\|- |\lambda | \|Ax_n\| \rightarrow \|T\|$ as $n \rightarrow \infty$. Therefore, $T \bot_{B} A$. \\
	Now,  suppose $(b)$ holds. Let $\lambda \in \mathbb{C}$. Then $\lambda= t \alpha$ for some $\alpha \in U$ and $t \in \mathbb{R}$. If $t \geq 0$ then  $ Ax_n\in(Tx_n)^{+(\epsilon_n)}_{\alpha} $ for all $n\in \mathbb{N},$ gives that  $\|Tx_n+\lambda Ax_n\|=\|Tx_n+ t \alpha Ax_n\| \geq \sqrt[]{1-\epsilon_n^2}~\|Tx_n\|$.\\
	This implies, $ \|T+\lambda A\|\geq\|(T+\lambda A)x_n\|=\|Tx_n+ t \alpha Ax_n\|  \geq\sqrt[]{1-\epsilon_n^2} \|Tx_n\|$. Since $ \epsilon_n \rightarrow 0 $ and $ \|Tx_n\| \rightarrow \| T \| $ as $ n \rightarrow \infty, $ we obtain, 
	\[ \|T+\lambda A \|\geq \|T\|. \]
	Similarly, if $t \leq 0$ then $ Ay_n\in(Ty_n)^{-(\delta_n)}_{\alpha} $ for all $n\in \mathbb{N}$ implies that
	\[ \|T+\lambda A \|\geq \|T\|. \]
	This completes the proof of the sufficient part. \\
	Let us now prove the comparatively trickier necessary part. \\
	Suppose $(a)$ does not hold. \\
	Without loss of generality let us assume that $\|A\| \leq 1$. \\
	Since $T \bot_{B} A$, for any nonzero scalar $\lambda$, $\|T + \lambda A\| \geq \|T\|$. In particular, for $\alpha \in U$ and for each $n \in \mathbb{N}$, 
	\[\|T + \frac{\alpha}{n} A\| > \|T\| - \frac{1}{n^3}. \]
	Therefore, for each $n \in \mathbb{N}$, there exists a sequence $\{x_n\} $ in $ S_{\mathbb{X}}$ such that $\|(T+ \frac{\alpha}{n} A) x_n \| > \|T\|- \frac{1}{n^3} \geq \|Tx_n\|- \frac{1}{n^3}$. \\
	We claim that $\|Tx_n\|\rightarrow \|T\|$. Indeed, $\|Tx_n\|=\|(T+\frac{\alpha}{n}A)x_n - \frac{\alpha}{n}Ax_n\|$ $\geq \|(T+\frac{\alpha}{n}A)x_n\| - |\frac{\alpha}{n}| \|Ax_n\|$ $>\|T\|- \frac{1}{n^3} - \frac{1}{n} \|A\|$ $\rightarrow \|T\|$ as $n \rightarrow \infty. $ Since $ x_n \in S_{\mathbb{X}},$ $ \| Tx_n \| \leq \| T \|  $. This proves our claim. \\
	Since $(a)$ does not hold, we assume that, $\inf \limits_{n \in \mathbb{N}}\|Ax_n\|= c > 0$. \\
	Choose $n_1 \in \mathbb{N}$ such that $n_1 > \frac{2 \|T\|}{c}$. Since $\|Tx_n\| \rightarrow \|T\|>0,$ there exists $n_2\in \mathbb{N}$ such that $\|Tx_n\| > \frac{\|T\|}{2} > 0$ for all $n \geq n_2$. Choose $n_3 \in \mathbb{N}$ such that $n_3 > \frac{2}{\|T\|}$. Let $n_0 =$ max$\{ n_1,n_2, n_3\}$. Then for all $n\geq n_0$, $0< \frac{1}{n \|Tx_n\|} < \frac{2}{n \|T\|}< 1$, which implies that for all $n\geq n_0$, $0< 1 - \frac{1}{n \|Tx_n\|}< 1$. \\
	Choose $\epsilon_n=\sqrt[]{1- (1- \frac{1}{n \|Tx_n\|})^2}$. Then clearly $\epsilon_n \rightarrow 0$ as $n \rightarrow \infty$.\\
	We claim that $Ax_n\in(Tx_n)^{+(\epsilon_n)}_{\alpha}$ for all $n\geq n_0$.\\
	Let $n \geq n_0$. Then for $0 \leq t < \frac{1}{n}$, \\
	$\|Tx_n + t \alpha Ax_n\| \geq \|Tx_n\|- t \|Ax_n\| \geq \|Tx_n\|-\frac{1}{n}$. \\
	For $\frac{1}{n} \leq t \leq n$, we claim that $\|Tx_n +t \alpha Ax_n\| \geq \|Tx_n\|- \frac{1}{n}$. \\
	Suppose on the contrary, we have, $\|Tx_n +t \alpha Ax_n\| < \|Tx_n\|- \frac{1}{n}$ for some $\frac{1}{n} \leq t \leq n$. Now, $Tx_n + \frac{\alpha}{n} Ax_n = (1- \frac{1}{n t})Tx_n + \frac{1}{n t}(Tx_n +t \alpha Ax_n)$. This implies that, $\|Tx_n\|- \frac{1}{n^3} < \|Tx_n + \frac{\alpha}{n} Ax_n\| \leq  (1- \frac{1}{n t})\|Tx_n\| + \frac{1}{n t}\|(Tx_n +t \alpha Ax_n)\| < (1- \frac{1}{n t})\|Tx_n\| + \frac{1}{n t}(\|Tx_n\| - \frac{1}{n})= \|Tx_n\|- \frac{1}{n^2 t}$. This implies that $t > n$, a contradiction. \\
	Thus, for $0 \leq t \leq n$, $\|Tx_n +t \alpha Ax_n\| \geq \|Tx_n\|- \frac{1}{n}$. Therefore, $0 \leq t \leq \frac{2 \|T\|}{c}$ gives that $\|Tx_n +t \alpha Ax_n\| \geq \|Tx_n\|- \frac{1}{n}$. \\
	Now, for $t > \frac{2 \|T\|}{c}$, $\|Tx_n +t \alpha Ax_n\| \geq t \|Ax_n\|- \|Tx_n\| \geq t c- \|Tx_n\| > 2\|T\|-\|Tx_n\| \geq \|Tx_n\|- \frac{1}{n}$. \\
	Therefore, for all $t \geq 0$, $\|Tx_n +t \alpha Ax_n\| \geq \|Tx_n\|- \frac{1}{n}= \sqrt[]{1-\epsilon_n^2} ~\|Tx_n\|$. This completes the proof of our claim. \\
	Similarly, considering $\|T-\frac{\alpha}{n} A\|> \|T\|- \frac{1}{n^3}$ for each $n \in \mathbb{N}$, we can find the desired sequences $\{y_n\} $ in $ S_{\mathbb{X}}$ and $ \{\delta _n\} $ in $ \mathbb{R}^+$ such that all the conditions of $(b)$ are satisfied. This completes the proof of the necessary part of the theorem.
\end{proof}

The corresponding characterization of Birkhoff-James orthogonality of bounded linear operators between real Banach spaces, now follow as a simple corollary to the previous theorem.
\begin{cor}(Theorem 2.4 of \cite{SPM})
	Let $\mathbb{X}$ and $\mathbb{Y}$ be two real Banach spaces. Let $T \in \mathbb{B} (\mathbb{X}, \mathbb{Y})$ be non-zero. Then for  any $A \in \mathbb{B} (\mathbb{X}, \mathbb{Y}),~ T \bot_B A$ if and only if either $(a)$ or $(b)$ holds: \\
	$(a)$ There exists a sequence $\{x_n\}$ in $S_\mathbb{X}$ such that $\| Tx_n\| \rightarrow \|T\|$ and $\|Ax_n\| \rightarrow 0$ as $n\rightarrow \infty$. \\
	$(b)$ There exist two sequences $\{x_n\},~\{y_n\}$ in $S_\mathbb{X}$ and two sequences of positive real numbers $\{\epsilon_n\}$, $\{\delta_n\}$ such that \\ 
	(i)  $ \epsilon_n \rightarrow 0$, $\delta_n\rightarrow 0$ as $n\rightarrow \infty$,\\
	(ii)  $\| Tx_n\| \rightarrow \|T\|$ and $\|Ty_n\|\rightarrow\|T\|$ as $n\rightarrow \infty$,\\
	(iii) $Ax_n\in(Tx_n)^{+(\epsilon_n)}$ and $Ay_n\in(Ty_n)^{-(\delta_n)}$ for all  $n\in \mathbb{N}$.
\end{cor}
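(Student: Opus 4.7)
The plan is to deduce the corollary directly from Theorem \ref{theorem:bounded} by specializing the complex argument to the real setting, using the observation already made in the paper that over $\mathbb{R}$ the index set $U$ degenerates to the singleton $\{1\}$. Concretely, if $\mathbb{X}$ and $\mathbb{Y}$ are real Banach spaces, then every scalar $\lambda$ appearing in the definition of B-J orthogonality is real, so $\lambda = t \alpha$ with $\alpha = 1$ and $t \in \mathbb{R}$ is the only decomposition we ever need. Under the identification $\alpha = 1$, the notations $x_\alpha^{+}$, $x_\alpha^{-}$, $x_\alpha^{+(\epsilon)}$, $x_\alpha^{-(\epsilon)}$ collapse to $x^{+}$, $x^{-}$, $x^{+(\epsilon)}$, $x^{-(\epsilon)}$ as defined for real Banach spaces in the introduction.

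For the sufficiency direction, I would simply rerun the short argument of the sufficient part of Theorem \ref{theorem:bounded}: in case $(a)$, the estimate $\|T+\lambda A\|\geq \|Tx_n\|-|\lambda|\,\|Ax_n\|\to \|T\|$ is valid over $\mathbb{R}$ verbatim; in case $(b)$, any real $\lambda$ is either $\geq 0$ or $\leq 0$, and the membership conditions $Ax_n\in(Tx_n)^{+(\epsilon_n)}$ and $Ay_n\in(Ty_n)^{-(\delta_n)}$ give $\|T+\lambda A\|\geq \sqrt{1-\epsilon_n^2}\,\|Tx_n\|$ or $\sqrt{1-\delta_n^2}\,\|Ty_n\|$ respectively, which pass to the limit to yield $\|T+\lambda A\|\geq \|T\|$.

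For the necessity direction, I would invoke the necessity half of Theorem \ref{theorem:bounded}. Under the hypothesis that $(a)$ fails, the proof there constructs, for each $\alpha \in U$, sequences $\{x_n(\alpha)\},\{y_n(\alpha)\}$ and positive sequences $\{\epsilon_n(\alpha)\},\{\delta_n(\alpha)\}$ satisfying conditions (i)--(iii). Over the reals this construction is performed only for the single value $\alpha = 1$ (using $\|T\pm\tfrac{1}{n}A\|>\|T\|-\tfrac{1}{n^3}$, which is legitimate because $T\perp_B A$ over $\mathbb{R}$), yielding one pair of sequences $\{x_n\},\{y_n\}$ in $S_\mathbb{X}$ and one pair $\{\epsilon_n\},\{\delta_n\}$ of positive reals with the required properties. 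Reading off the membership $Ax_n\in(Tx_n)^{+(\epsilon_n)}_{1}=(Tx_n)^{+(\epsilon_n)}$ and the analogous statement for $y_n$ gives exactly the conditions (i)--(iii) of condition $(b)$ in the corollary.

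The only point that requires any care is to verify that no step of the proof of Theorem \ref{theorem:bounded} secretly uses complex scalars in an essential way; I expect this to be a routine check, since the whole construction on a fixed $\alpha$-ray is purely one-parameter in the real variable $t$, and the triangle/convexity inequalities used (in particular the decomposition $Tx_n+\tfrac{\alpha}{n}Ax_n=(1-\tfrac{1}{nt})Tx_n+\tfrac{1}{nt}(Tx_n+t\alpha Ax_n)$) remain valid with $\alpha$ replaced by $1$ and $t\in\mathbb{R}$. This is the main (and essentially only) obstacle, and once it is observed, the corollary follows.
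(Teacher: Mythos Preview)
Your proposal is correct and takes essentially the same approach as the paper: the paper's proof is simply the one-line observation that in a real Banach space $\alpha\in U$ forces $\alpha=1$, whence the result follows directly from Theorem~\ref{theorem:bounded}. Your write-up is a more detailed unpacking of exactly this reduction, so nothing further is needed.
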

\begin{proof}
	we note that in real Banach space, we must have, $\alpha \in U$ implies that $\alpha=1$. Therefore, the result follows easily from Theorem \ref{theorem:bounded}.
\end{proof}

In particular, in the statement of Theorem \ref{theorem:bounded}, if $\mathbb{X}$ is reflexive and $T,~ A$ from $\mathbb{X}$ to $\mathbb{Y}$ are compact, then we have the following theorem: 

\begin{theorem}\label{theorem:reflexive}
	Let $\mathbb{X}$ be a reflexive complex Banach space, $\mathbb{Y}$ be any complex Banach space. Let $ T, A \in \mathbb{K}(\mathbb{X}, \mathbb{Y}).$  Then $ T\perp_{B}A$ if and only if for each $\alpha \in U$ there exist $x = x(\alpha), ~y= y(\alpha) \in M_{T} $ such that $Ax \in (Tx)_\alpha^{+}$ and $Ay \in (Ty)_\alpha^{-}.$
\end{theorem}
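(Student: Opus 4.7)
The strategy is to reduce to Theorem \ref{theorem:bounded} and then use the reflexivity of $\mathbb{X}$ together with the compactness of $T$ and $A$ to pass from approximate attainment along sequences to genuine attainment at single points.

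\textbf{Sufficiency.} This is the easy direction and proceeds directly from the definitions. Given any $\lambda\in\mathbb{C}\setminus\{0\}$, write $\lambda=t\alpha$ with $\alpha\in U$ and $t\in\mathbb{R}$. If $t\geq 0$, pick the $x=x(\alpha)\in M_T$ with $Ax\in(Tx)_\alpha^+$ provided by the hypothesis; then
\[
\|T+\lambda A\|\geq\|(T+\lambda A)x\|=\|Tx+t\alpha Ax\|\geq\|Tx\|=\|T\|.
\]
If $t\leq 0$, use $y=y(\alpha)\in M_T$ with $Ay\in(Ty)_\alpha^-$ in the same way. Hence $T\perp_B A$.

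\textbf{Necessity.} Suppose $T\perp_B A$. Apply Theorem \ref{theorem:bounded} to get either (a) or (b). Fix an arbitrary $\alpha\in U$; I will construct $x(\alpha),y(\alpha)\in M_T$ as required.

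If (a) holds, take a sequence $\{x_n\}\subset S_\mathbb{X}$ with $\|Tx_n\|\to\|T\|$ and $\|Ax_n\|\to 0$. By reflexivity, a subsequence (not relabeled) converges weakly to some $x\in\mathbb{X}$ with $\|x\|\leq 1$. Compactness of $T$ and $A$ upgrades this to norm convergence $Tx_n\to Tx$ and $Ax_n\to Ax$. Hence $\|Tx\|=\|T\|$, which forces $\|x\|=1$, so $x\in M_T$, and $Ax=0$. Since the zero vector belongs to $(Tx)_\alpha^+$ and $(Tx)_\alpha^-$ for every $\alpha$, we may simply set $x(\alpha)=y(\alpha)=x$.

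If (b) holds, fix $\alpha$ and take the sequences $\{x_n=x_n(\alpha)\},\{y_n=y_n(\alpha)\}\subset S_\mathbb{X}$ together with $\epsilon_n,\delta_n\to 0^+$ given by that condition. By reflexivity, passing to subsequences we may assume $x_n\rightharpoonup x$ and $y_n\rightharpoonup y$ weakly; then compactness of $T$ and $A$ yields norm convergence
\[
Tx_n\to Tx,\quad Ax_n\to Ax,\quad Ty_n\to Ty,\quad Ay_n\to Ay.
\]
Combined with $\|Tx_n\|\to\|T\|$ and $\|Ty_n\|\to\|T\|$ this gives $\|Tx\|=\|Ty\|=\|T\|$, and (since $\|x\|,\|y\|\leq 1$ by weak lower semicontinuity of the norm) we conclude $\|x\|=\|y\|=1$, i.e.\ $x,y\in M_T$. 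For any $t\geq 0$, condition (iii) of (b) gives $\|Tx_n+t\alpha Ax_n\|\geq\sqrt{1-\epsilon_n^2}\,\|Tx_n\|$, and letting $n\to\infty$ we obtain $\|Tx+t\alpha Ax\|\geq\|Tx\|$, so $Ax\in(Tx)_\alpha^+$. The analogous argument with $y_n$ and $t\leq 0$ yields $Ay\in(Ty)_\alpha^-$. Setting $x(\alpha)=x$, $y(\alpha)=y$ finishes the proof.

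\textbf{Main obstacle.} The delicate point is guaranteeing that the weak limits $x,y$ actually lie on $S_\mathbb{X}$ rather than merely in $B_\mathbb{X}$, and that the inequalities appearing in $(Tx)_\alpha^{+(\epsilon_n)}$ with shrinking tolerance $\epsilon_n$ survive the limit as genuine inequalities defining $(Tx)_\alpha^+$. Both are resolved by compactness of $T$ (and of $A$), which converts the weak convergence into norm convergence of $Tx_n,Ty_n,Ax_n,Ay_n$; reflexivity is used only to extract the weak subsequences. The compactness hypothesis is therefore essential and cannot be weakened.
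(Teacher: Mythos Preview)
Your proof is correct and follows essentially the same strategy as the paper: reduce to Theorem~\ref{theorem:bounded}, then use reflexivity to extract weakly convergent subsequences and compactness of $T,A$ to upgrade to norm convergence of the images, thereby passing from approximate to exact norm attainment. In fact you are more careful than the paper's own proof, which skips the verification that the weak limit has unit norm and leaves the entire case (b) to ``similar arguments''; your explicit handling of the limit in the $(Tx_n)_\alpha^{+(\epsilon_n)}$ inequalities fills in exactly what the paper omits.
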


\begin{proof}
	Suppose $T\bot_B A$. Then either $(a)$ or $(b)$ of Theorem \ref{theorem:bounded} holds. If $(a)$ holds then there exists a sequence $\{x_n\}$ such that $\|Tx_n\|\longrightarrow \|T\|$ and $\|Ax_n\|$ converges to $0$. Then since $\mathbb{X}$ is reflexive, $\{x_n\}$ has a weakly convergent subsequence, say $\{x_{n_k}\}$ converging weakly to $x$ (say). Since $T,A$ are compact operators, $\{Tx_{n_k}\},~ \{Ax_{n_k}\}$ converges to $Tx,~ Ax$ respectively. Hence $x\in M_T$ and $Ax=0$. Thus, $Ax \in (Tx)_\alpha^{+}$ and $Ax \in (Tx)_\alpha^{-}$ for each $\alpha \in U$. If $(b)$ holds, then by using similar arguments as in case $(a)$, we can  find $x=x(\alpha), ~y=(\alpha) \in M_T$ such that $Ax \in (Tx)_\alpha^{+}$ and $Ay \in (Ty)_\alpha^{-}.$
\end{proof}
In particular, if $ \mathbb{X}, \mathbb{Y} $ are finite-dimensional complex Banach spaces, then we have the following corollary:

\begin{cor}\label{corollary:ortho complex}
	Let $\mathbb{X}, \mathbb{Y}$ be finite-dimensional complex Banach spaces. Let $ T, A \in \mathbb{L}(\mathbb{X}, \mathbb{Y}).$  Then $ T\perp_{B}A$ if and only if for each $\alpha \in U$ there exist $x = x(\alpha), ~y= y(\alpha) \in M_{T} $ such that $Ax \in (Tx)_\alpha^{+}$ and $Ay \in (Ty)_\alpha^{-}.$  
\end{cor}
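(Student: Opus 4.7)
The plan is to recognize that this corollary is nothing more than a direct specialization of Theorem \ref{theorem:reflexive}, so the entire task reduces to verifying that the two hypotheses of that theorem (reflexivity of $\mathbb{X}$ and compactness of $T,A$) come for free in the finite-dimensional setting.

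First I would note that every finite-dimensional normed space is reflexive. Since $\dim \mathbb{X}^{*} = \dim \mathbb{X}$ (any basis of $\mathbb{X}$ yields a dual basis of $\mathbb{X}^{*}$), iterating gives $\dim \mathbb{X}^{**} = \dim \mathbb{X}$, and the canonical isometric embedding $J \colon \mathbb{X} \hookrightarrow \mathbb{X}^{**}$ must therefore be surjective. Hence $\mathbb{X}$ is reflexive, so the reflexivity hypothesis of Theorem \ref{theorem:reflexive} is automatically satisfied.

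Next I would check that any $T \in \mathbb{L}(\mathbb{X}, \mathbb{Y})$ is in fact compact. By the Heine--Borel property of finite-dimensional spaces, the closed unit ball $B_\mathbb{X}$ is norm-compact, so its continuous image $T(B_\mathbb{X})$ is compact in $\mathbb{Y}$, giving $T \in \mathbb{K}(\mathbb{X}, \mathbb{Y})$. The same applies to $A$, so both operators qualify as compact linear operators.

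With these two observations in hand, the corollary follows by a direct application of Theorem \ref{theorem:reflexive} to the pair $(T,A)$, with no further calculation required. There is no substantive obstacle here: the entire content of the corollary lies in recognizing the finite-dimensional case as a trivial instance of the more general reflexive/compact result already established.
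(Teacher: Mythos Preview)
Your proposal is correct and follows exactly the same approach as the paper: both observe that finite-dimensional complex Banach spaces are reflexive and that every linear operator between them is compact, then invoke Theorem~\ref{theorem:reflexive} directly. The only difference is that you spell out the standard justifications for reflexivity and compactness in slightly more detail than the paper does.
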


\begin{proof}
	Since every finite-dimensional complex Banach space is reflexive and every  linear operator on a finite-dimensional complex Banach space is compact, the proof of the corollary follows from Theorem \ref{theorem:reflexive}.
\end{proof}

We would further like to comment that the proofs of the corresponding characterization theorems in the real case are now obvious:
\begin{cor}\label{cor:reflexive}(Theorem 2.1 of \cite{SPM})
		Let $\mathbb{X}$ be a reflexive real Banach space, $\mathbb{Y}$ be any real Banach space. Let $ T, A \in \mathbb{K}(\mathbb{X}, \mathbb{Y}).$  Then $ T\perp_{B}A$ if and only if  there exist $x , y \in M_{T} $ such that $Ax \in (Tx)^{+}$ and $Ay \in (Ty)^{-}.$  
\end{cor}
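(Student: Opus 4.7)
The plan is to deduce this real-case corollary by mimicking the proof of Theorem \ref{theorem:reflexive}, using the previous corollary (the real analogue of Theorem \ref{theorem:bounded}) as the starting point, and exploiting reflexivity of $\mathbb{X}$ together with compactness of $T$ and $A$ to upgrade approximating sequences to actual norm-attaining vectors.

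For the sufficiency direction, I would observe that if $x, y \in M_T$ satisfy $Ax \in (Tx)^+$ and $Ay \in (Ty)^-$, then for any $\lambda \in \mathbb{R}$, writing $\lambda = t$ with either $t \geq 0$ or $t \leq 0$, the estimate $\|T + \lambda A\| \geq \|(T + \lambda A)x\| = \|Tx + \lambda Ax\| \geq \|Tx\| = \|T\|$ (using $x$ when $\lambda \geq 0$ and $y$ when $\lambda \leq 0$) gives $T \perp_B A$ immediately.

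For the necessity, assuming $T \perp_B A$, the previous corollary supplies either (a) a sequence $\{x_n\} \subset S_\mathbb{X}$ with $\|Tx_n\| \to \|T\|$ and $\|Ax_n\| \to 0$, or (b) two sequences $\{x_n\}, \{y_n\}$ in $S_\mathbb{X}$ together with error sequences $\{\epsilon_n\}, \{\delta_n\}$ tending to $0$ such that $Ax_n \in (Tx_n)^{+(\epsilon_n)}$ and $Ay_n \in (Ty_n)^{-(\delta_n)}$. In case (a), since $\mathbb{X}$ is reflexive the bounded sequence $\{x_n\}$ has a weakly convergent subsequence $x_{n_k} \rightharpoonup x$; by compactness of $T$ and $A$ the images $Tx_{n_k} \to Tx$ and $Ax_{n_k} \to Ax$ in norm, so $\|Tx\| = \|T\|$ (whence $x \in M_T$) and $Ax = 0$. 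Setting $y = x$ we trivially have $Ax = 0 \in (Tx)^+$ and $Ay = 0 \in (Ty)^-$.

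In case (b), I would apply the same weak-convergence-plus-compactness argument to both sequences, extracting $x_{n_k} \rightharpoonup x$ and $y_{m_k} \rightharpoonup y$ with $x, y \in M_T$. The defining inequalities $\|Tx_{n_k} + t A x_{n_k}\| \geq \sqrt{1 - \epsilon_{n_k}^2}\,\|Tx_{n_k}\|$ for all $t \geq 0$ pass to the limit, via norm-convergence of $Tx_{n_k}$ and $Ax_{n_k}$, to yield $\|Tx + tAx\| \geq \|Tx\|$ for all $t \geq 0$, i.e., $Ax \in (Tx)^+$; an identical argument for the $y$-sequence gives $Ay \in (Ty)^-$. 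The one point requiring mild care is that compactness is essential here: weak convergence alone would not let us pass the norm inequality $\|Tx_{n_k} + tAx_{n_k}\| \geq \cdots$ to its limit, but compactness promotes it to norm convergence and this step becomes routine. This is the only place where the hypotheses of reflexivity and compactness are both genuinely used, and it is the same mechanism as in Theorem \ref{theorem:reflexive}, so I do not anticipate any obstacle beyond verifying these standard limit manipulations.
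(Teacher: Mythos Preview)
Your argument is correct, but it takes a longer route than the paper's. The paper dispatches the corollary in a single line: since in a real Banach space the set $U$ collapses to $\{1\}$, the sets $(Tx)_\alpha^{\pm}$ reduce to $(Tx)^{\pm}$, and so Theorem~\ref{theorem:reflexive} applied verbatim yields the desired $x,y\in M_T$. In other words, the paper deduces the real corollary by direct specialization of the already-proved complex reflexive theorem, rather than by going back to Corollary~2.3.1 (the real version of Theorem~\ref{theorem:bounded}) and re-running the reflexivity-plus-compactness extraction argument from scratch.

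What you do is essentially re-prove Theorem~\ref{theorem:reflexive} in the real setting: you start from the real bounded-case corollary, then extract weakly convergent subsequences and use compactness to pass the approximate inequalities to exact ones. This is perfectly valid and has the virtue of being self-contained (one might otherwise object to invoking a theorem stated for complex spaces in the real setting), but it duplicates work already encapsulated in Theorem~\ref{theorem:reflexive}. One minor point you gloss over: after $x_{n_k}\rightharpoonup x$ you should note that $\|x\|\le 1$ by weak lower semicontinuity, and then $\|Tx\|=\|T\|$ forces $\|x\|=1$ (for $T\neq 0$), so that indeed $x\in S_{\mathbb{X}}$ and hence $x\in M_T$; this is implicit in the paper's Theorem~\ref{theorem:reflexive} as well.
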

\begin{proof}
	Let $T \bot_B A$. Since in real Banach space, $\alpha \in U$ implies that $\alpha=1$, by Theorem \ref{theorem:reflexive}, there exist $x,y \in M_T$ such that  $Ax \in (Tx)^{+}$ and $Ay \in (Ty)^{-}.$  
\end{proof}

\begin{cor}\label{corollary:ortho real}(Theorem 2.2 of \cite{S})
	Let $\mathbb{X}, \mathbb{Y}$ be finite-dimensional real Banach spaces. Let $ T, A \in \mathbb{L}(\mathbb{X}, \mathbb{Y}).$  Then $ T\perp_{B}A$ if and only if  there exist $x, y \in M_{T} $ such that $Ax \in (Tx)^{+}$ and $Ay \in (Ty)^{-}.$  
\end{cor}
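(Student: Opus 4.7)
The plan is to recognize this corollary as an immediate specialization of Corollary \ref{cor:reflexive} to the finite-dimensional real setting. Two basic facts from Banach space theory make the reduction work: first, every finite-dimensional normed space is reflexive, since the canonical embedding into the double dual is injective and both spaces share the same (finite) dimension, so the embedding is an isomorphism; second, every bounded linear operator between finite-dimensional normed spaces is compact, because the closed unit ball in $\mathbb{X}$ is norm-compact by finite-dimensionality, and continuous maps send compact sets to compact sets. Thus $\mathbb{X}$ is a reflexive real Banach space and $T, A \in \mathbb{L}(\mathbb{X}, \mathbb{Y}) = \mathbb{K}(\mathbb{X}, \mathbb{Y})$, so the hypotheses of Corollary \ref{cor:reflexive} are satisfied.

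Having verified those hypotheses, the proof reduces to a one-line invocation: apply Corollary \ref{cor:reflexive} to obtain the equivalence $T \perp_B A$ if and only if there exist $x, y \in M_T$ with $Ax \in (Tx)^+$ and $Ay \in (Ty)^-$, which is exactly the claimed characterization. Since the deduction is purely a matter of recognizing that finite-dimensional real spaces automatically meet the stronger hypotheses of the earlier corollary, there is no genuine obstacle; the only care required is to observe that in finite dimensions $\mathbb{L}(\mathbb{X}, \mathbb{Y}) = \mathbb{K}(\mathbb{X}, \mathbb{Y})$, which is what allows the reflexive-plus-compact case to cover \emph{all} bounded linear operators and thereby recover the original result of \cite{S}.

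As a sanity check, one could instead work directly from Theorem \ref{theorem:bounded} (via its real-case version): the sequences $\{x_n\}, \{y_n\} \subseteq S_{\mathbb{X}}$ produced there would admit convergent subsequences by compactness of $S_{\mathbb{X}}$ in finite dimensions, the limits would lie in $M_T$ by continuity of $T$ and the norm, and the defining inequalities $\|Tx_n + t\, Ax_n\| \ge \sqrt{1-\epsilon_n^2}\,\|Tx_n\|$ (and the analogous ones for $y_n$) would pass to the limit as $\epsilon_n, \delta_n \to 0$, yielding $Ax \in (Tx)^+$ and $Ay \in (Ty)^-$. This redundant route only confirms that the cleanest write-up is the one-step invocation of Corollary \ref{cor:reflexive}.
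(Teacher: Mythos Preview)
Your proof is correct and follows essentially the same approach as the paper: both reduce to Corollary~\ref{cor:reflexive} by noting that finite-dimensional spaces are reflexive and that all bounded linear operators between them are compact. Your additional sanity-check paragraph via Theorem~\ref{theorem:bounded} is not needed, but it does no harm.
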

\begin{proof}
		Since every finite-dimensional complex Banach space is reflexive and every  linear operator on a finite-dimensional complex Banach space is compact, the proof of the corollary follows from Corollary \ref{cor:reflexive}.
\end{proof}

\begin{remark}
In view of the method employed in proving Theorem $ 2.3, $ it should be mentioned that our proof essentially follows the same line of arguments, as outlined in the proof of Theorem $ 2.4 $ of \cite{SPM}. Indeed, the novelty lies in introducing the corresponding notations in the complex case, analogous to the real case. Moreover, as we will see in the next theorem, in spite of being a complete characterization of B-J orthogonality of compact linear operators on a reflexive complex Banach space, Theorem \ref{theorem:reflexive} does not capture the  full strength of the complex number system. Indeed, in our opinion, Theorem \ref{theorem:reflexive} should be regarded as a stepping stone towards our next theorem, that also distinguishes the complex case from the real case. First we need the following geometric lemma:
\end{remark}

\begin{lemma}\label{lemma:complex}
	Let $\mathbb{X}$ be a complex Banach space. Let $x,~y \in \mathbb{X}$ and $\alpha=e^{i \theta}$, where $\theta \in [0,\pi]$. If $y \in x^{+}_{\alpha}$ then either $y \in x^{+}_{\beta}$ for all $\beta$ with $\arg \beta \in [0, \theta]$ or $y \in x^{+}_{\beta}$ for all $\beta$ with  $\arg \beta \in [\theta, \pi]$.
\end{lemma}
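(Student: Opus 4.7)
I would translate the problem into a question about the open convex set
\[
V = \{z \in \mathbb{C} : \|x+zy\| < \|x\|\}
\]
in the complex plane. Since $z \mapsto \|x+zy\|$ is continuous and convex, $V$ is open and convex, and $0 \notin V$ because $\|x\|$ is not strictly less than itself. Under this translation, the condition $y \in x_{e^{i\phi}}^{+}$ says exactly that the ray $\{t e^{i\phi} : t \geq 0\}$ does not meet $V$; in particular the hypothesis reads that the ray in direction $\theta$ misses $V$.

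I would then argue by contradiction, assuming both alternatives fail. Then there exist $\phi \in [0, \theta)$, $\psi \in (\theta, \pi]$ and positive reals $t_0, t_1$ with $z_0 := t_0 e^{i\phi} \in V$ and $z_1 := t_1 e^{i\psi} \in V$; the strictness $\phi < \theta < \psi$ comes from the hypothesis $y \in x_\alpha^+$, which forbids $\theta$ itself from being such a witness. By convexity of $V$, the whole segment $z(r) := (1-r) z_0 + r z_1$, $r \in [0,1]$, lies in $V$, and the goal is to exhibit some $z(r^\ast)$ that is a positive multiple of $e^{i\theta}$, contradicting the hypothesis. If $\psi - \phi < \pi$ then $z_0$ and $z_1$ are not antiparallel, so the segment avoids $0$; because $\phi, \psi \in [0, \pi]$ it also lies in the closed upper half-plane, on which the branch of argument with values in $[0, \pi]$ is continuous. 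Applying the intermediate value theorem to $r \mapsto \arg z(r)$, which runs from $\phi$ to $\psi$, furnishes $r^\ast$ with $\arg z(r^\ast) = \theta$, and hence $z(r^\ast) = s e^{i\theta}$ with $s > 0$, as desired. In the remaining degenerate case $(\phi, \psi) = (0, \pi)$, both $z_0 > 0$ and $z_1 < 0$ are real, so the segment from $z_0$ to $z_1$ passes through $0 \notin V$, contradicting convexity immediately.

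The delicate step is keeping the intermediate value argument airtight: a single-valued continuous branch of argument must be defined along the entire segment and must attain the value $\theta$. Restricting to the branch $\arg \in [0, \pi]$ on the closed upper half-plane minus the origin does this precisely because $\phi, \psi, \theta \in [0, \pi]$ and, outside the degenerate case, the segment avoids the origin. The boundary cases $\theta \in \{0, \pi\}$ require no work, since then one of the conclusion intervals shrinks to $\{\theta\}$ and the hypothesis supplies that interval directly.
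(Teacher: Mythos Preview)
Your proof is correct and follows essentially the same idea as the paper's: both reduce to the convexity of the norm function $z\mapsto\|x+zy\|$ (equivalently, of your set $V$) and obtain a contradiction by showing that the segment joining two ``bad'' points $t_1\alpha_1,\,t_2\alpha_2$ must meet the ray $\{t\,e^{i\theta}:t\ge 0\}$. The paper simply asserts that some convex combination equals $t\alpha$ with $t>0$ and calls it easy; your intermediate-value argument on $\arg z(r)$ is a clean justification of that step, and your separate treatment of the degenerate case $(\phi,\psi)=(0,\pi)$ (where the segment is real and the contradiction comes instead from $0\notin V$) is a nice touch that the paper leaves implicit.
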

\begin{proof}
Suppose  $y \notin x^{+}_{\alpha_1}$ for some $\alpha_1$ with  $\arg \alpha_1 \in [0, \theta]$. Then there exists $t_1 > 0$ such that $\|x+ t_1 \alpha_1 y\|< \|x\|$. We claim that $y \in x^{+}_{\beta}$ for all $\beta$ with $\arg \beta \in [\theta, \pi]$. If possible suppose that  $y \notin x^{+}_{\alpha_2}$ for some $\alpha_2$ with  $\arg \alpha_2 \in [ \theta, \pi]$. Then there exists $t_2 > 0$ such that $\|x+ t_2 \alpha_2 y\|< \|x\|$. Then it is easy to verify that there exist $0< s < 1$ and $t > 0$ such that $(1-s) t_1 \alpha_1 + s t_2 \alpha_2= t \alpha$. Therefore, $(1-s)[x+t_1 \alpha_1 y]+ s [x+ t_2 \alpha_2 y]= x+ t \alpha y$. This implies that $\|x+ t \alpha y\|\leq (1-s)\|x+ t_1 \alpha_1 y \|+ s \|x+ t_2 \alpha_2 y\|< (1-s)\|x\| + s \|x\|= \|x\|$, a contradiction. This proves our claim.
\end{proof}

Let us now prove the following characterization theorem, that improves the necessary part of Theorem \ref{theorem:reflexive}.

\begin{theorem}
	Let $\mathbb{X}$ be a reflexive complex Banach space and $\mathbb{Y}$ be any complex Banach space. Let $ T,A\in K(\mathbb{X},\mathbb{Y})$. Then $T\bot_{B}A$ if and only if there exist $x,~ y,~ z,~ w \in M_{T}$ and $\phi_{1}, \phi_{2} \in [0,\pi]$ such that\\
	 (i) $ Ax \in (Tx)^{+}_{\alpha} ~~\forall~~ \alpha $ with $\arg \alpha \in [0,\phi_{1}]$,\\
	(ii) $ Ay \in (Ty)^{+}_{\alpha} ~~\forall~~ \alpha$ with $\arg \alpha \in [\phi_{1}, \pi],$ \\
	(iii) $  Az \in (Tz)^{-}_{\alpha} ~~\forall~~ \alpha $ with $\arg \alpha \in [0,\phi_{2}]$, \\
	(iv) $Aw \in (Tw)^{-}_{\alpha} ~~\forall~~ \alpha $ with $\arg \alpha \in [\phi_{2}, \pi]$.
	
\end{theorem}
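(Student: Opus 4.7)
For sufficiency, I would argue by direct calculation. Given $\lambda\in\mathbb{C}$, write $\lambda=t\alpha$ with $t\in\mathbb{R}$ and $\alpha$ a unit complex number with $\arg\alpha\in[0,\pi]$, using the convention $(Tu)^{+}_{-1}=(Tu)^{-}_{1}$. If $t\geq 0$, then whichever of $\arg\alpha\in[0,\phi_{1}]$ or $\arg\alpha\in[\phi_{1},\pi]$ holds, condition $(i)$ or $(ii)$ furnishes some $u\in\{x,y\}$ with $\|Tu+t\alpha Au\|\geq\|Tu\|=\|T\|$, hence $\|T+\lambda A\|\geq\|(T+\lambda A)u\|\geq\|T\|$. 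The case $t\leq 0$ is handled identically using $z,w$ and the partition at $\phi_{2}$.

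For necessity, assume $T\perp_{B}A$. For each $u\in M_{T}$ introduce
\[
T^{+}(u)=\{\theta\in[0,\pi] : Au\in(Tu)^{+}_{e^{i\theta}}\}.
\]
The map $\theta\mapsto\|Tu+te^{i\theta}Au\|$ is continuous, so $T^{+}(u)$ is closed in $[0,\pi]$; Lemma~\ref{lemma:complex} then shows $T^{+}(u)=[0,a(u)]\cup[b(u),\pi]$ for some $a(u),b(u)$, with either summand possibly empty. Theorem~\ref{theorem:reflexive} applied at $\alpha=1$ produces one $u_{0}\in M_{T}$ with $0\in T^{+}(u_{0})$ and, because $(Tv_{0})^{-}_{1}=(Tv_{0})^{+}_{-1}$, one $v_{0}\in M_{T}$ with $\pi\in T^{+}(v_{0})$. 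I would then set
\[
\phi_{1}=\sup\{a(u) : u\in M_{T},\ 0\in T^{+}(u)\},\qquad \psi_{1}=\inf\{b(v) : v\in M_{T},\ \pi\in T^{+}(v)\}.
\]

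Next I would show both extremes are attained. Pick $u_{n}\in M_{T}$ with $a(u_{n})\to\phi_{1}$; reflexivity of $\mathbb{X}$ provides a subsequence with $u_{n_{k}}\rightharpoonup x$, and compactness of $T,A$ then gives $Tu_{n_{k}}\to Tx$ and $Au_{n_{k}}\to Ax$ in norm. Then $\|Tx\|=\lim\|Tu_{n_{k}}\|=\|T\|$ forces $\|x\|=1$, so $x\in M_{T}$. For any $\theta<\phi_{1}$ the inequality $\|Tu_{n_{k}}+te^{i\theta}Au_{n_{k}}\|\geq\|Tu_{n_{k}}\|$ holds for all large $k$ and all $t\geq 0$; passing to the limit yields $\theta\in T^{+}(x)$, and closedness of $T^{+}(x)$ upgrades this to $[0,\phi_{1}]\subset T^{+}(x)$, whence $a(x)=\phi_{1}$. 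The symmetric argument produces $y\in M_{T}$ with $[\psi_{1},\pi]\subset T^{+}(y)$.

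Finally I would rule out $\psi_{1}>\phi_{1}$ by contradiction. Picking $\theta\in(\phi_{1},\psi_{1})$ and applying Theorem~\ref{theorem:reflexive} produces $u\in M_{T}$ with $\theta\in T^{+}(u)$; Lemma~\ref{lemma:complex} then forces either $[0,\theta]\subset T^{+}(u)$ (so $a(u)\geq\theta>\phi_{1}$, contradicting the definition of $\phi_{1}$) or $[\theta,\pi]\subset T^{+}(u)$ (so $b(u)\leq\theta<\psi_{1}$, contradicting the definition of $\psi_{1}$). Hence $\psi_{1}\leq\phi_{1}$, and $[\phi_{1},\pi]\subset[\psi_{1},\pi]\subset T^{+}(y)$ combines with $[0,\phi_{1}]\subset T^{+}(x)$ to give $(i)$ and $(ii)$. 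The vectors $z,w$ and threshold $\phi_{2}$ are produced by the identical argument with the $-$ relation in place of the $+$ relation. The main obstacle is the attainment step: reflexivity together with the joint compactness of $T$ and $A$ is what allows the defining inequalities for $T^{+}$ to be transferred to the weak limit point, and only after attainment is secured does the sup/inf bookkeeping close up cleanly.
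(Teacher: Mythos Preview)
Your proposal is correct and follows essentially the same route as the paper: both arguments invoke Theorem~\ref{theorem:reflexive} to obtain a direction-wise witness in $M_{T}$, use Lemma~\ref{lemma:complex} to force the angle sets to be unions of an initial and a terminal interval, run a sup/inf crossing argument to locate $\phi_{1}$, and appeal to reflexivity plus compactness of $T,A$ to pass to weak limits and secure attainment. Your packaging via the sets $T^{+}(u)$ and the endpoints $a(u),b(u)$ is a slightly cleaner bookkeeping of exactly what the paper does with its sets $V_{1},V_{2}$ and $\xi=\sup V_{1}$, $\eta=\inf V_{2}$.
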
	
\begin{proof}
	We first prove the easier sufficient part. Suppose there exist $x,~ y,~ z,~ w \in M_{T}$ and $\phi_{1}, \phi_{2} \in [0,\pi]$ such that all the conditions in $(i), (ii), (iii)$ and $(iv)$ are satisfied. Let $\lambda \in \mathbb{C}$. Then either there exist $t_1 \geq 0$ and $ \alpha_1 $ with $\arg \alpha_1 \in [0, \phi_1]$ such that $\lambda = t_1 \alpha_1$ or there exist $t_2 \geq 0$ and $\alpha_2 $ with $\arg \alpha_2  \in [ \phi_1, \pi]$ such that $\lambda = t_2 \alpha_2$ or there exist $t_3 \leq 0$ and $ \alpha_3 $ with $\arg \alpha_3 \in [0, \phi_2]$ such that $\lambda = t_3 \alpha_3$ or there exist $t_4 \leq 0$ and $ \alpha_4 $ with $\arg \alpha_4 \in [ \phi_2, \pi]$ such that $\lambda = t_4 \alpha_4$. Now $\lambda= t_1 \alpha_1$ implies that $\|T + \lambda A\| = \|T + t_1 \alpha_1 A\| \geq \|Tx + t_1 \alpha_1 Ax\| \geq \|Tx\|= \|T\|$. Similarly, in the other cases, it can be shown that $\|T+ \lambda A\|\geq \|T\|$.  Hence $T\bot_B A$.\\
    For the necessary part, suppose that $T\perp_{B}A$. Then from Theorem \ref{theorem:reflexive}, we have, for each $\alpha $ with $ \arg \alpha   \in [0,\pi]$, there exists $x_{\alpha}\in M_{T}$ such that $ Ax_{\alpha}\in (Tx_{\alpha})^{+}_{\alpha}$. 
	Now, consider 
	\[V_1=\{\theta \in [0, \pi]: \exists ~ x\in M_T : Ax \in (Tx)^{+}_{\alpha}~\forall ~ \alpha ~ with ~ \arg \alpha  \in [0,\theta]  \},\]
	\[V_2= \{\theta \in [0, \pi]: \exists ~ x\in M_T : Ax \in (Tx)^{+}_{\alpha}~\forall ~ \alpha ~ with ~ \arg \alpha  \in [\theta, \pi]  \}.\]
	Clearly, $ 0 \in V_1$ and $\pi \in V_2$ and therefore, $V_1,~ V_2$ are non-empty. Moreover, $V_1,~V_2 $ are bounded. Suppose $\xi=\sup V_1$ and $\eta= \inf V_2$. Now, we claim that $\xi \geq \eta$.  If possible suppose that $\xi < \eta$. Then consider $\zeta= \frac{\xi + \eta}{2} \in [0, \pi]$. Now, from Theorem \ref{theorem:reflexive}, we have, for $\alpha = e^{i \zeta}$ there exists $x_{\alpha} \in M_T$ such that $Ax_{\alpha}\in (Tx_{\alpha})^{+}_{\alpha}$. Using Lemma \ref{lemma:complex}, we have, either $Ax_{\alpha}\in (Tx_{\alpha})^{+}_{\beta}$ for all $\beta$ with $\arg \beta \in [0,\zeta]$ or $Ax_{\alpha}\in (Tx_{\alpha})^{+}_{\beta}$ for all $\beta$ with $\arg \beta \in [\zeta, \pi]$. But  $Ax_{\alpha}\in (Tx_{\alpha})^{+}_{\beta}$ for all $\beta$ with $ \arg \beta \in [0,\zeta]$ implies that $\zeta \in V_1$. This contradicts that $\xi= \sup V_1 $. Again, $Ax_{\alpha}\in (Tx_{\alpha})^{+}_{\beta}$ for all $\beta$ with $\arg \beta \in [\zeta, \pi]$ implies that $\zeta \in V_2$. This contradicts that $\eta= \inf V_2$. Hence $\xi \geq  \eta$.  Now, there exist sequences $\{\xi_n\} \subseteq V_1,~ \{\eta_{n} \}\subseteq V_2$ such that $\{\xi_n\}$ converges to $\xi$ and $\{\eta_{n}\}$ converges to $\eta$. Since $\xi_n \in V_1,~ \eta_{n} \in V_2$, there exist $x_{n},~ y_{{n}} \in M_T$ such that $Ax_{n} \in (Tx_{n})^{+}_{\alpha}~\forall ~ \alpha $ with $\arg \alpha \in [0,\xi_n] $ and $Ay_n  \in (Ty_n)^{+}_{\alpha}~\forall ~ \alpha $ with $\arg \alpha \in [\eta_{n}, \pi] $. Since $\mathbb{X}$ is reflexive, $\{x_{n}\},~ \{y_n \}$ have weakly convergent subsequences. Without loss of generality assume that $\{x_{n}\}$ weakly converges to $x$ and $\{y_n\}$  weakly converges to $y$. Since $T,A$ are compact operators, $Tx_n \longrightarrow Tx,~ Ty_n \longrightarrow Ty,~Ax_n \longrightarrow Ax,~ Ay_n \longrightarrow Ay.$ Clearly $x, y \in M_T$. Now, $\|Tx_{n}+ t \alpha Ax_{n}\|\geq \|T\|$ for all $t \geq 0$ and for all $\alpha$ with $ \arg \alpha \in [0, \xi_n] \Rightarrow \|Tx+ t \alpha Ax\|\geq \|T\|$ for all $t \geq 0$ and for all $\alpha$ with $\arg \alpha \in [0, \xi]$. Similarly, $\|Ty_n+ t \alpha Ay_n\|\geq \|T\|$ for all $t \geq 0$ and for all $\alpha$ with $\arg \alpha \in [ \eta_n, \pi] \Rightarrow \|Ty+ t \alpha Ay\|\geq \|T\|$ for all $t \geq 0$ and for all $\alpha$ with $\arg \alpha \in [ \eta, \pi]$. Since $\xi \geq \eta$, $\|Ty+ t \alpha Ay\|\geq \|T\|$ for all $t \geq 0$ and for all $\alpha$ with  $\arg \alpha \in [ \xi, \pi]$.  Let $\xi=\phi_{1}$. Then $Ax \in (Tx)_{\alpha}^+$ for all $\alpha$ with $\arg \alpha \in [0, \phi_1]$ and $Ay \in (Ty)_{\alpha}^+$ for all $\alpha $ with $\arg \alpha \in [\phi_1, \pi]$. \\
	Similarly, for each $\alpha$ with $\arg \alpha \in [0,\pi]$ there exists $z_{\alpha}\in M_{T}$ such that $ Az_{\alpha}\in (Tz_{\alpha})^{-}_{\alpha}$ gives that there exist $\phi_2 \in [0, \pi]$ and $z, w \in M_T$ such that 	$  Az \in (Tz)^{-}_{\alpha} ~~\forall~~ \alpha $ with $\arg \alpha \in [0,\phi_{2}]$ and
	$Aw \in (Tw)^{-}_{\alpha} ~~\forall~~ \alpha $ with $\arg \alpha \in [\phi_{2}, \pi]$.
\end{proof}

 Sain and Paul proved in \cite{SP} that if $T$ is a linear operator on  a finite-dimensional real Banach  space  $\mathbb{X}$, with $M_T = \pm D$ ($D$ being a closed connected subset of $S_{\mathbb{X}}$), then $ T \bot_{B} A $ if and only if there exists $ x \in D $ such that $Tx \bot_{B} Ax.$ In the following theorem we prove an analogous result for complex Banach spaces. Before proving the theorem, let us observe that if $\mathbb{X}$ is a complex Banach space, $ T \in \mathbb{L}(\mathbb{X})$ and $D$ is a closed connected subset of $S_{\mathbb{X}}$ such that $ D \subset M_T, $  then we must have, $ \bigcup_{\theta \in [0, 2\pi)} e^{i\theta}D \subset M_T $ and $ \bigcup_{\theta \in [0, 2\pi)} e^{i\theta}D $ is also a connected subset of $S_{\mathbb{X}}$. Note that, it is not true in general, if $\mathbb{X}$ is a real Banach space. This explains the change in the statement of Theorem \ref{theorem:connected}, compared to the corresponding real case. 

\begin{theorem}\label{theorem:connected}
Let $\mathbb{X}$ be a finite-dimensional complex Banach space. Let $ T \in \mathbb{L}(\mathbb{X})$ be such that $M_T$ is a closed connected subset of $S_{\mathbb{X}}.$ Then for $A \in \mathbb{L}(\mathbb{X})$, $ T \bot_B A$ if and only if for each $ \alpha \in U $, there exists $x = x(\alpha) \in M_T $ such that $ Tx \bot_{\alpha} Ax.$ 
\end{theorem}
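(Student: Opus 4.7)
The plan is to prove sufficiency by a direct norm estimate, and necessity by combining Corollary \ref{corollary:ortho complex} with a standard connectedness argument on $M_T$. The key point is that, for each fixed $\alpha \in U$, the set $M_T$ splits into two closed pieces according to whether $Az$ lies in $(Tz)_\alpha^+$ or in $(Tz)_\alpha^-$, and connectedness will force these pieces to overlap.

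For the sufficient direction, given any $\lambda \in \mathbb{C}$, write $\lambda = t\alpha$ with $\alpha \in U$ and $t \in \mathbb{R}$. By hypothesis there exists $x = x(\alpha) \in M_T$ with $Tx \perp_\alpha Ax$, so
\[
\|T+\lambda A\| \;\geq\; \|Tx + t\alpha\, Ax\| \;\geq\; \|Tx\| \;=\; \|T\|,
\]
which yields $T \perp_B A$.

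For the necessary direction, fix $\alpha \in U$ and apply Corollary \ref{corollary:ortho complex} to produce $x_0, y_0 \in M_T$ with $Ax_0 \in (Tx_0)_\alpha^{+}$ and $Ay_0 \in (Ty_0)_\alpha^{-}$. Set
\[
M_T^{+} := \{z \in M_T : Az \in (Tz)_\alpha^{+}\}, \qquad M_T^{-} := \{z \in M_T : Az \in (Tz)_\alpha^{-}\}.
\]
Proposition \ref{prop:prop1 }(i) gives $M_T = M_T^{+} \cup M_T^{-}$, while each defining condition is an intersection (over $t\geq 0$, respectively $t\leq 0$) of the continuous inequalities $\|Tz + t\alpha Az\| \geq \|Tz\|$; hence $M_T^{+}$ and $M_T^{-}$ are both closed in $M_T$. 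They are also non-empty, since $x_0 \in M_T^{+}$ and $y_0 \in M_T^{-}$.

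Since $M_T$ is connected and covered by two non-empty closed subsets, we must have $M_T^{+} \cap M_T^{-} \neq \emptyset$. Any $z$ in this intersection satisfies $Az \in (Tz)_\alpha^{+} \cap (Tz)_\alpha^{-}$, and Proposition \ref{prop:prop1 }(ii) then gives $Tz \perp_\alpha Az$; taking $x(\alpha) := z$ completes the proof. The main subtlety, rather than a true obstacle, is recognizing that the connectedness of $M_T$ is exactly the ingredient needed to upgrade the two separate witnesses $x_0, y_0$ supplied by Corollary \ref{corollary:ortho complex} to a single common vector $x(\alpha)$; this is also why the statement requires connectedness of $M_T$ itself rather than the weaker real-case hypothesis $M_T = \pm D$ mentioned in the preceding remark.
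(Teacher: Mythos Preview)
Your proof is correct and follows essentially the same approach as the paper: both invoke Corollary \ref{corollary:ortho complex} to obtain the two witnesses $x_0,y_0$ and then use the connectedness of $M_T$ to merge them into a single vector. The paper defers the connectedness step to Theorem~2.1 of \cite{SP}, whereas you spell it out explicitly via the closed covering $M_T = M_T^{+}\cup M_T^{-}$, which is a welcome self-contained version of the same argument.
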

\begin{proof} The sufficient part of the theorem follows trivially.
Since $ T \bot_B A $, applying Corollary \ref{corollary:ortho complex}, it follows that for each $\alpha \in U$, there exist $x = x(\alpha), ~y= y(\alpha) \in M_{T} $ such that $Ax \in (Tx)_\alpha^{+}$ and $Ay \in (Ty)_\alpha^{-}.$ 
 Then considering the line passing through $\alpha $ and $ - \alpha$ in the complex plane and  following the same arguments as in Theorem 2.1 of \cite{SP}, it can be concluded that there exists $u \in M_T$ such that $Au \in (Tu)_\alpha^+ $ and $ Au \in (Tu)_\alpha^-$, by using the connectedness of $M_T.$ This implies that $Tu \bot_{\alpha} Au.$ This establishes the theorem.
\end{proof} 

Once again, in contrast to the real case, we would like to sharpen the necessary part of Theorem \ref{theorem:connected} in the complex case. First we need the following lemma.

\begin{lemma}\label{lemma:connected}
	Let $\mathbb{X}$ be a complex Banach space. Let $x, y \in \mathbb{X} $ and $\alpha \in U$ with $\arg \alpha = \theta$ be such that  $x\bot_{\alpha} y$. Then either $y \in (x)^{+}_{\beta}$ for all $\beta$ with $\arg \beta \in [\theta- \pi, \theta]$ or $y \in (x)^{+}_{\beta}$ for all $\beta$ with  $\arg \beta \in [\theta, \theta + \pi]$. 
\end{lemma}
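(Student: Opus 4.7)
The natural first move is to unpack what $x \perp_\alpha y$ buys us on the full unit circle. By Proposition \ref{prop:prop1 }(ii), $x \perp_\alpha y$ is equivalent to $y \in x_\alpha^+ \cap x_\alpha^-$, and invoking the convention $x_{e^{i\pi}\alpha}^+ = x_\alpha^-$, this says precisely that $y \in x_\beta^+$ at the two antipodal directions $\beta = \alpha$ (so $\arg\beta = \theta$) and $\beta = -\alpha$ (so $\arg\beta = \theta \pm \pi$). We thus already have the conclusion at the endpoints of the two intervals in the statement, and the real task is to propagate it across one of the two open half-circles.

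The plan is to mirror the convex-combination argument of Lemma \ref{lemma:complex}. Assume for contradiction that both alternatives fail. Then there exist $\beta_1$ with $\arg \beta_1 \in (\theta - \pi, \theta)$ and $\beta_2$ with $\arg \beta_2 \in (\theta, \theta + \pi)$ such that $y \notin x_{\beta_1}^+$ and $y \notin x_{\beta_2}^+$; the endpoints $\theta$ and $\theta \pm \pi$ are ruled out by the first paragraph. Pick witnesses $s_1, s_2 > 0$ with $\|x + s_1 \beta_1 y\| < \|x\|$ and $\|x + s_2 \beta_2 y\| < \|x\|$. Viewing $\mathbb{C}$ as $\mathbb{R}^2$, the points $s_1 \beta_1$ and $s_2 \beta_2$ sit in strictly opposite open half-planes determined by the real line $L = \{t\alpha : t \in \mathbb{R}\}$, so the segment joining them must cross $L$ at some point $t^* \alpha$, $t^* \in \mathbb{R}$.

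Choose $s \in (0,1)$ with $(1-s) s_1 \beta_1 + s \, s_2 \beta_2 = t^* \alpha$. Then
\[ (1-s)(x + s_1 \beta_1 y) + s(x + s_2 \beta_2 y) = x + t^* \alpha y, \]
so the triangle inequality combined with the two strict inequalities gives $\|x + t^* \alpha y\| < \|x\|$. On the other hand, $x \perp_\alpha y$ forces $\|x + t^* \alpha y\| \geq \|x\|$ for every $t^* \in \mathbb{R}$, which is the desired contradiction.

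The main subtlety, compared to Lemma \ref{lemma:complex}, is that the crossing parameter $t^*$ need not be positive: depending on the relative positions of $\beta_1$ and $\beta_2$, one can have $t^* < 0$ or even $t^* = 0$ (precisely when $\beta_2 = -\beta_1$). This is exactly where one-sided information of the form $y \in x_\alpha^+$ alone would fall short; the two-sided hypothesis $x \perp_\alpha y$, equivalently $y \in x_\beta^+$ at both $\beta = \alpha$ and $\beta = -\alpha$, is essential to secure $\|x + t^* \alpha y\| \geq \|x\|$ for every real $t^*$. This is the one step that genuinely uses the full strength of the assumption, and it is the point I would treat most carefully in a formal write-up.
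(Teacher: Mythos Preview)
Your proof is correct and follows essentially the same route as the paper's own argument: assume both alternatives fail, pick witnesses $\beta_1,\beta_2$ on the two half-circles with strict inequalities, form a convex combination landing on the line $\{t\alpha:t\in\mathbb{R}\}$, and contradict $x\perp_\alpha y$ via the triangle inequality. Your write-up is in fact slightly more explicit than the paper's, handling the endpoints $\theta,\theta\pm\pi$ up front and spelling out why the segment must cross $L$ with $s\in(0,1)$ and $t^*\in\mathbb{R}$ possibly of either sign, where the paper simply asserts the existence of such $s$ and $t$.
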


\begin{proof}
	Let  $x\bot_{\alpha} y$. Suppose that $y \notin (x)^{+}_{\beta_1}$ for some $\beta_1$ with $\arg \beta_1 \in [\theta- \pi, \theta]$. Then there exists $t_1 > 0$ such that $\|x+ t_1 \beta_1 y\|< \|x\|$. If possible suppose that $y \notin (x)^{+}_{\beta_2}$ for some $\beta_2$ with $\arg \beta_2 \in [\theta, \theta + \pi]$.  Then there exists $t_2 > 0$ such that $\|x+ t_2 \beta_2 y\|< \|x\|$. Then it is easy to verify that there exist $0<s<1$ and $t\in \mathbb{R}$ such that 
	\begin{eqnarray*}
	        t \alpha &=& (1-s)t_1 \beta_1+ s t_2 \beta_2 \\
	        \Rightarrow x+ t \alpha y &=&(1-s)(x+t_1 \beta_1 y)+ s (x+ t_2 \beta_2 y)  \\
	        \Rightarrow \|x+ t \alpha y\| &\leq &(1-s)\|(x+t_1 \beta_1 y)\|+ s \|(x+ t_2 \beta_2 y)\|  \\
	        \Rightarrow \|x+ t \alpha y\| &<&(1-s)\|x\|+ s \|x\| \\
	        &=&\|x\|,
	\end{eqnarray*}
	this leads to a contradiction and so $y \in (x)^{+}_{\beta}$ for all $\beta$ with $\arg \beta \in [\theta, \theta + \pi]$. This proves the lemma.
\end{proof}

Now, the promised theorem:
\begin{theorem}\label{theorem:conn}
   Let $\mathbb{X}$ be a finite-dimensional complex Banach space. Let $ T \in \mathbb{L}(\mathbb{X})$ be such that $M_T$ is a closed connected subset of $S_{\mathbb{X}}.$ Then for $A \in \mathbb{L}(\mathbb{X})$, $ T \bot_B A$ if and only if there exist some $ \theta \in [0,\pi] $ and $x ,y \in M_T $ such that $ Ax \in (Tx)^{+}_{\alpha}$ for all $\alpha$ with $\arg \alpha \in [\theta- \pi, \theta]$ and  $ Ay \in (Ty)^{+}_{\alpha}$ for all $\alpha$ with $\arg \alpha \in [\theta, \theta+ \pi]$.
\end{theorem}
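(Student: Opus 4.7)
The plan is to combine Theorem \ref{theorem:connected} with Lemma \ref{lemma:connected} so that each direction $\alpha\in U$ contributes a full arc of length $\pi$ on which some norm-attainment vector witnesses the $(+)$-relation, and then to couple two such arcs at an antipodal pair of endpoints by a topological argument on the circle $\mathbb{T}=\mathbb{R}/2\pi\mathbb{Z}$. The sufficient direction is routine: given $\theta$, $x$, $y$ as in the statement, every $\lambda\in\mathbb{C}$ writes as $\lambda=te^{i\phi}$ with $t\geq 0$ and $\phi\in[\theta-\pi,\theta+\pi]$; if $\phi\in[\theta-\pi,\theta]$ then $\|T+\lambda A\|\geq\|Tx+te^{i\phi}Ax\|\geq\|Tx\|=\|T\|$ by the hypothesis on $x$, and the case $\phi\in[\theta,\theta+\pi]$ is identical with $y$ in place of $x$.

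For the necessary direction, assume $T\perp_B A$. Theorem \ref{theorem:connected} supplies, for each $\alpha\in U$, a vector $x(\alpha)\in M_T$ with $Tx(\alpha)\perp_\alpha Ax(\alpha)$, and Lemma \ref{lemma:connected} then forces $Ax(\alpha)\in (Tx(\alpha))^+_\beta$ either for every $\beta$ with $\arg\beta\in[\arg\alpha-\pi,\arg\alpha]$ or for every $\beta$ with $\arg\beta\in[\arg\alpha,\arg\alpha+\pi]$. I record this information in the set
\[
V=\{\theta\in\mathbb{T}:\exists\,x\in M_T\ \text{with}\ Ax\in(Tx)^+_\beta\ \text{for all }\beta\ \text{with}\ \arg\beta\in[\theta-\pi,\theta]\},
\]
and observe that the dichotomy translates into the covering property $V\cup(V+\pi)=\mathbb{T}$, because each $\phi\in[0,\pi)$ forces either $\phi$ or $\phi+\pi$ into $V$.

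The technical heart of the argument is the closedness of $V$ in $\mathbb{T}$. Given $\theta_n\to\theta$ with $\theta_n\in V$ and witnesses $x_n\in M_T$, I use finite-dimensionality of $\mathbb{X}$ (so $S_\mathbb{X}$ is compact and $M_T$ is closed in it) to extract a subsequence $x_{n_k}\to x\in M_T$. For each fixed $\beta$ with $\arg\beta\in(\theta-\pi,\theta)$, the inequality $\|Tx_{n_k}+t\beta Ax_{n_k}\|\geq\|Tx_{n_k}\|$ eventually holds for all $t\geq 0$, and norm-continuity passes it to $x$; the two boundary directions $\arg\beta=\theta-\pi,\theta$ are recovered by a second limit in $\beta$. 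With $V$ and hence $V+\pi$ closed and covering the connected space $\mathbb{T}$, the only way to avoid $V\cap(V+\pi)\neq\emptyset$ would be to have $V$ empty, which contradicts the covering. Any $\theta_0\in V\cap(V+\pi)$ then yields the desired $x$ (from $\theta_0\in V$) together with $y$ (from $\theta_0-\pi\in V$, whose arc $[\theta_0-2\pi,\theta_0-\pi]$ coincides with $[\theta_0,\theta_0+\pi]$ on $\mathbb{T}$); reducing modulo $\pi$ and, if necessary, exchanging the labels of $x$ and $y$ places the final $\theta$ in $[0,\pi]$.

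The main obstacle I expect is verifying the closedness of $V$ cleanly, because one must match each fixed direction $\beta$ in the limit arc to an element of the moving arcs $[\theta_n-\pi,\theta_n]$ uniformly, and treat the two boundary directions by a separate approximation. Once $V$ is closed, the remaining pieces (the dichotomy from Lemma \ref{lemma:connected}, the antipodal covering, and the two-set connectedness argument on the circle) fit together in a short topological paragraph.
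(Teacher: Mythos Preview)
Your proposal is correct and follows essentially the same approach as the paper: invoke Theorem \ref{theorem:connected} together with Lemma \ref{lemma:connected} to obtain the half-circle dichotomy, encode this in sets of admissible angles, prove these sets are closed using compactness of $M_T$ and continuity of the norm, and conclude by a connectedness argument that they must intersect. The only cosmetic difference is that you work with a single set $V$ and its antipodal translate $V+\pi$ on the full circle $\mathbb{T}$, whereas the paper restricts to the interval $[0,\pi]$ and uses two separately defined sets $V_1,V_2$; the arguments are otherwise identical.
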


\begin{proof}
	Let us first prove the sufficient part of the theorem. Suppose there exists some $ \theta \in [0,\pi] $ and $x ,y \in M_T$ such that $ Ax \in (Tx)^{+}_{\alpha}$ for all $\alpha$ with $\arg \alpha \in [\theta- \pi, \theta]$ and  $ Ay \in (Ty)^{+}_{\alpha}$ for all $\alpha$ with $\arg \alpha \in [\theta, \theta+ \pi]$. Let $\lambda \in \mathbb{C}$. Then either there exist $t_1 \geq  0$ and $ \alpha_1$ with $\arg \alpha_1 \in [\theta- \pi, \theta]$ such that $\lambda= t_1 \alpha_1$ or there exist $t_2 \geq  0$ and $\alpha_2$ with $\arg \alpha_2 \in [\theta, \theta + \pi]$ such that $\lambda= t_2 \alpha_2$. Now, $\lambda= t_1 \alpha_1 $ implies that $\|T+ \lambda A\|=\|T + t_1 \alpha_1 A\|\geq \|(T + t_1 \alpha_1 A)x\|\geq \|Tx\|=\|T\|$ and $\lambda= t_2 \alpha_2 $ implies that $\|T+ \lambda A\|=\|T + t_2 \alpha_2 A\|\geq \|(T + t_2 \alpha_2 A)y\|\geq \|Ty\|=\|T\|$. Therefore, $T\bot_B A$. This completes the proof of the sufficient part of the theorem.\\
	For the necessary part, suppose that $T\bot_B A$. Let us consider the following two sets
	\[V_1=\{\theta \in [0, \pi]: \exists~ x \in M_T ~:~ Ax ~ \in (Tx)^{+}_{\alpha}~ \forall ~ \alpha ~ with ~ \arg  \alpha  \in [\theta - \pi, \theta] \},\]
	 \[V_2=\{\theta \in [0, \pi]: \exists~ x \in M_T ~:~  Ax ~ \in (Tx)^{+}_{\alpha}~ \forall ~ \alpha ~ with ~ \arg \alpha \in [\theta, \theta+\pi]  \}.\]
	 We first show that $[0,\pi]= V_1 \cup V_2$. Let $\theta \in [0,\pi]$ and $\alpha= e^{i \theta}$. Since $T\bot_B A$, by Theorem \ref{theorem:connected}, we have, there exists $x=x(\alpha) \in M_T$ such that $Tx \bot_{\alpha} Ax$. Therefore, applying Lemma \ref{lemma:connected}, we have, either $Ax \in (Tx)^{+}_{\beta}$ for all $\beta$ with $\arg \beta \in [\theta- \pi, \theta]$, i.e, $\theta \in V_1$ or $Ax \in (Tx)^{+}_{\beta}$ for all $\beta$ with $\arg \beta \in [\theta, \theta + \pi]$, i.e, $\theta \in V_2$. Hence $[0,\pi]= V_1 \cup V_2$.\\
	  We claim that $V_1 \neq \emptyset$. Let $0 \notin V_1 $. Then $0 \in V_2$. Hence there exists $z \in M_T$ such that $Az ~ \in (Tz)^{+}_{\beta}~ \forall ~ \beta $ with $\arg \beta \in [0,\pi] $. This implies that $\pi \in V_1$. Hence $V_1 \neq \emptyset$. Similarly, it can be shown that $V_2 \neq \emptyset$.\\
	   We next show that $V_1$ is closed. Let $\{\theta_n\}$ be a sequence in $V_1$ such that $\{\theta_n\}$ converges to $\theta$. Let $\beta=e^{i \theta}$. Then there exists $x_n \in M_T$ such that $ Ax_n ~ \in (Tx_n)^{+}_{\alpha}~ \forall ~ \alpha$ with $\arg \alpha \in [\theta_n - \pi, \theta_n]$. Since $\mathbb{X}$ is finite-dimensional, $\{x_n\}$ has a convergent subsequence. Without loss of generality assume that $\{x_n\}$ converges to $x$ (say). Clearly, $x \in M_T$. Now, $Ax_n \in (Tx_n)^{+}_{\alpha}$ for all $\alpha$ with $\arg \alpha \in [\theta_n -\pi, \theta_n]$ gives that $\|Tx_n + t e^{i \theta_n} Ax_n\| \geq \|T\|$ for all $t \geq 0$. Letting $n \longrightarrow \infty$, we have $\|Tx+ t e^{i \theta}Ax\|=\|Tx + t\beta Ax\|\geq \|T\| \Rightarrow Ax \in (Tx)^{+}_{\beta}$. Similarly, $Ax \in (Tx)^{+}_{\gamma}$, where $\arg \gamma =\theta- \pi$. Now, let $\theta- \pi < \phi < \theta$. If possible suppose that there does not exist any $n_0 \in \mathbb{N}$ such that $\phi \in [\theta_n - \pi, \theta_n]$ for all $n \geq n_0$. Then without loss of generality we may assume that $\phi > \theta_n$ for all $n \in \mathbb{N}$. Letting $n \longrightarrow \infty$, we have $\phi \geq \theta$, a contradiction. Hence  there exists $n_0 \in \mathbb{N}$ such that $\phi \in [\theta_n - \pi, \theta_n]$ for all $n \geq n_0$. This implies that $\|Tx_n + t e^{i \phi}Ax_n\| \geq \|T\| $ for all $t \geq 0$ and for all $n \geq n_0$. Therefore, as $n \longrightarrow \infty$, we have $\|Tx+ te^{i \phi}Ax \| \geq \|T\|$ for all $t \geq 0$. This implies that $Ax \in (Tx)^{+}_{\delta}$, where $\delta= e^{i \phi}$.  Thus, $Ax\in (Tx)^{+}_{\alpha}$ for all $\alpha$ with $\arg \alpha \in [\theta -\pi, \theta]$. Hence $\theta \in V_1$. Thus, $V_1$ is closed. Similarly, it can be shown that $V_2$ is closed. \\
	   Now, since $[0,\pi]$ is connected, $V_1 \cap V_2 \neq \emptyset$. Let $\theta \in V_1 \cap V_2$. Then there exist $x, y \in M_T$ such that $ Ax ~ \in (Tx)^{+}_{\alpha}~ \forall ~ \alpha $ with $\arg \alpha \in [\theta - \pi, \theta]$ and $ Ay ~ \in (Ty)^{+}_{\alpha}~ \forall ~ \alpha$ with $\arg \alpha \in [\theta, \theta+ \pi]$. This establishes the theorem.  
\end{proof}

Next, in the context of complex Banach spaces we explore the structure of $M_T$ in connection with B-J orthogonality. We would like to invite the reader to have a look at Theorem 2.2 and Corollary 2.2.1 of \cite{Sa}, for an analogous result in the real case.
\begin{theorem}
Let $\mathbb{X} $ be a complex Banach space,  $0 \neq T\in \mathbb{L}(\mathbb{X})$  and  $x\in M_{T}.$ \\
(i)   If $y\in \mathbb{X}$ is such that $ Tx\perp_{B}Ty $ then $ x\perp_{B}y$.\\
(ii) $T(x_\alpha^{+}\setminus x_\alpha^{\perp})\subset (Tx)_\alpha^{+}\setminus (Tx)_\alpha^{\perp}$, for $\alpha \in U.$\\
(iii)  $T(x_\alpha^{-}\setminus x_\alpha^{\perp})\subset (Tx)_\alpha^{-}\setminus (Tx)_\alpha^{\perp}$, for $\alpha \in U.$\\
(iv) $\ker T \subset \bigcap_{x\in M_{T}}x^{\perp}$.
\end{theorem}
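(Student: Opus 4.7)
The strategy is to exploit the operator norm inequality $\|T(x+\lambda y)\| \leq \|T\|\cdot\|x+\lambda y\|$ together with the fact that $\|Tx\|=\|T\|$ when $x\in M_T$, and then to use Proposition 2.1 to handle the subtraction of the $x_\alpha^\perp$ part in (ii) and (iii).

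For (i), assume $Tx\perp_B Ty$. Since $x\in M_T$ gives $\|Tx\|=\|T\|$, for any $\lambda\in\mathbb{C}$ I would chain
\[ \|T\|\cdot\|x+\lambda y\| \geq \|T(x+\lambda y)\| = \|Tx+\lambda Ty\| \geq \|Tx\| = \|T\|, \]
and dividing by $\|T\|>0$ yields $\|x+\lambda y\|\geq 1=\|x\|$, that is, $x\perp_B y$. Part (iv) is essentially the same calculation: for $y\in\ker T$ and any $x\in M_T$, $\lambda\in\mathbb{C}$, one has $\|T\|\cdot\|x+\lambda y\|\geq\|Tx+\lambda Ty\|=\|Tx\|=\|T\|$, so $\|x+\lambda y\|\geq\|x\|$ and $y\in x^\perp$.

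The more interesting parts are (ii) and (iii). For (ii), let $y\in x_\alpha^+\setminus x_\alpha^\perp$. The first step is to observe via Proposition 2.1(ii) that $x_\alpha^\perp = x_\alpha^+\cap x_\alpha^-$, so $y\in x_\alpha^+$ while $y\notin x_\alpha^-$. The latter gives some $t_0<0$ with $\|x+t_0\alpha y\|<\|x\|=1$. Then the key step is to push this strict inequality through $T$:
\[ \|Tx+t_0\alpha Ty\| = \|T(x+t_0\alpha y)\| \leq \|T\|\cdot\|x+t_0\alpha y\| < \|T\| = \|Tx\|, \]
which shows $Ty\notin (Tx)_\alpha^-$. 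Now invoke Proposition 2.1(i) applied to the pair $Tx,Ty$: either $Ty\in (Tx)_\alpha^+$ or $Ty\in (Tx)_\alpha^-$; since the second is ruled out, $Ty\in (Tx)_\alpha^+$. Combined with $Ty\notin (Tx)_\alpha^-$, this also gives $Ty\notin (Tx)_\alpha^\perp$, completing (ii). Part (iii) is verbatim the same argument with the roles of $+$ and $-$ interchanged, using that $y\in x_\alpha^-\setminus x_\alpha^\perp$ forces $y\notin x_\alpha^+$ and hence some $t_0>0$ with $\|x+t_0\alpha y\|<1$.

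There is no real obstacle here; the one point worth being careful about is recognizing that the argument in (ii) is \emph{not} a direct verification of $Ty\in(Tx)_\alpha^+$ (which would require a lower bound on $\|T(x+t\alpha y)\|$ that the operator norm does not provide) but rather a \emph{negation}: one shows $Ty\notin (Tx)_\alpha^-$ from the strict contraction at a single negative $t_0$, and then appeals to the dichotomy in Proposition 2.1(i) to land in $(Tx)_\alpha^+$.
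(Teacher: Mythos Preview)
Your proof is correct and follows essentially the same approach as the paper: the same chain of inequalities for (i), the same ``push a strict inequality through $T$ and use the dichotomy of Proposition~2.1'' argument for (ii) and (iii), and the same reduction for (iv). The only cosmetic differences are that the paper deduces (iv) from (i) rather than repeating the computation, and that you spell out the appeals to Proposition~2.1(i),(ii) more explicitly than the paper does.
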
	
\begin{proof}
$(i)$ 	Suppose $ Tx\perp_{B}Ty $. Then $\|T\|\|x\|= \|Tx\| \leq \|Tx+\lambda Ty\| \leq \|T\| \|x+\lambda y\| ~\forall \lambda \in \mathbb{C}$. This implies that $ \| x + \lambda y \| \geq \|x\| ~\forall \lambda \in \mathbb{C}$. Therefore, $ x \bot_B y.$ \\ 

$(ii)$  Let $y \in x_\alpha^{+}\setminus x_\alpha^{\perp}.$ Then there exist some $t<0$ such that $ \|x+ t \alpha y\|< \|x\|.$ Now, $\|Tx + t \alpha Ty\| \leq \|T\| \|x+ t \alpha y\|< \|T\|\|x\|=\|Tx\|$. This implies that $  Ty\notin (Tx)_{\alpha}^{-}$. It now follows from Proposition 2.1 that $Ty\in (Tx)_\alpha^{+}\setminus (Tx)_\alpha^{\perp}$. \\

$(iii)$ Follows similarly as $(ii)$. \\
	
$(iv)$ If   $M_{T}=\phi$, then the theorem follows trivially. Let us assume that  $M_{T}\neq\phi$. Let $y\in \ker T.$ Then for any $x\in M_{T}$, we have $Ty\in(Tx)^{\perp}$, since $Ty=0$. From (i) it follows that $y\in x^{\perp}$. This implies that 
$\ker T \subset \bigcap_{x\in M_{T}}x^{\perp}$.
\end{proof}

\begin{remark}
In addition, if $x,~Tx$ are smooth points in $\mathbb{X}$ then for any $y \in \mathbb{X}$, we have, $x\bot_B y \Rightarrow Tx \bot_B Ty$. This can be proved following the same line of arguments, as in Lemma 2.1 of \cite{SGP}.

\end{remark}

We next study left symmetric linear operator(s) defined on a finite-dimensional complex Banach space $\mathbb{X}.$ An element $ T \in \mathbb{L}(\mathbb{X}) $ is said to be left symmetric if $ T \bot_B A $ implies $ A \bot_B T $ for any $ A \in \mathbb{L}(\mathbb{X}).$ In \cite{S}, Sain proved a nice connection between the left symmetry of bounded linear operators and the left symmetric points in the corresponding norm attainment set, for real Banach spaces. Following the same line of arguments, these results can be proved for complex Banach spaces. We simply state the following two theorems for complex Banach spaces.
\begin{theorem}\label{theorem:left1}
Let $\mathbb{X} $ be a finite-dimensional strictly convex complex Banach space. If $ T \in \mathbb{L}(\mathbb{X})$ is left symmetric then for each $x \in M_T$, $Tx$ is a left symmetric point.
\end{theorem}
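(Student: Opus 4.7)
The plan is a contrapositive argument. Assume $T \in \mathbb{L}(\mathbb{X})$ is left symmetric and, for contradiction, that there exists $x \in M_T$ at which $Tx$ fails to be a left symmetric point. Then one can pick $y \in \mathbb{X}$ with $Tx \perp_B y$ but $y \not\perp_B Tx$; in particular $y \neq 0$. The goal is to manufacture $A \in \mathbb{L}(\mathbb{X})$ witnessing that $T$ is not left symmetric, i.e., with $T \perp_B A$ yet $A \not\perp_B T$, which will contradict our standing assumption.

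For the construction, finite-dimensionality together with Hahn--Banach yields $f \in S_{\mathbb{X}^*}$ with $f(x) = 1$, and strict convexity pins this $f$ down uniquely and forces its norm-attainment set on $S_{\mathbb{X}}$ to be exactly the complex orbit $\{e^{i\theta}x : \theta \in \mathbb{R}\}$ (two unit vectors both sent to $1$ by $f$ coincide by averaging plus strict convexity, and then the $|f(u)|=1$ case reduces by rotation). I will define the rank-one operator $A$ by $Au := f(u)\,y$; then $\|A\| = \|y\|$ and $M_A = \{e^{i\theta}x : \theta \in \mathbb{R}\}$.

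For $T \perp_B A$, apply Corollary \ref{corollary:ortho complex} with $u = v = x \in M_T$: since $Ax = y$ and $Tx \perp_B y$, Proposition \ref{prop:prop2 }(i) gives $y \in (Tx)^+ \cap (Tx)^-$, so $Ax \in (Tx)^+_\alpha \cap (Tx)^-_\alpha$ for every $\alpha \in U$. For $A \not\perp_B T$, suppose the contrary. By Corollary \ref{corollary:ortho complex}, for each $\alpha \in U$ one finds $u = e^{i\theta_1}x$ and $v = e^{i\theta_2}x$ in $M_A$ with $Tu \in (Au)^+_\alpha$ and $Tv \in (Av)^-_\alpha$. Using $Au = e^{i\theta_1}y$, $Tu = e^{i\theta_1}Tx$ (and similarly for $v$), together with Proposition \ref{prop:prop1 }(vii)--(viii) applied with $\beta = e^{\pm i\theta_j}$, these conditions collapse to $Tx \in y^+_\alpha$ and $Tx \in y^-_\alpha$. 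Since $\alpha \in U$ was arbitrary, intersecting over $\alpha$ gives $Tx \in y^+ \cap y^-$, equivalently $y \perp_B Tx$ by Proposition \ref{prop:prop2 }(i), contradicting the choice of $y$.

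The only technical delicacy is identifying $M_A$ precisely as the complex orbit $\{e^{i\theta}x\}$; this is the place where strict convexity is genuinely used, and it is also what distinguishes the argument from the real counterpart in \cite{S}, where the orbit collapses to $\{\pm x\}$. Once $M_A$ is pinned down, the rotation statements Proposition \ref{prop:prop1 }(vii)--(viii) do all the bookkeeping needed to convert the rank-one construction into the desired contradiction.
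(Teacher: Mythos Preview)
Your argument is correct and is exactly the route the paper has in mind: the paper does not supply a proof of this theorem but merely remarks that the argument of \cite{S} for the real case carries over verbatim, and your rank-one construction $Au=f(u)\,y$ together with the identification $M_A=\{e^{i\theta}x:\theta\in\mathbb{R}\}$ is precisely that adaptation, with Proposition~\ref{prop:prop1 }(vii)--(viii) handling the extra unimodular scalars that appear in the complex setting. One small inaccuracy worth correcting: strict convexity of $\mathbb{X}$ does \emph{not} make the supporting functional $f$ at $x$ unique (that would require smoothness); what strict convexity does give you---and what you actually use---is that the norm-attainment set of any chosen $f\in S_{\mathbb{X}^*}$ on $S_{\mathbb{X}}$ is a single complex orbit, so $M_A=\{e^{i\theta}x\}$ as claimed.
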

\begin{theorem}\label{theorem:left2}
	Let $\mathbb{X}$ be a finite-dimensional strictly convex and smooth complex Banach space. Let $ T \in \mathbb{L}(\mathbb{X})$
be such that there exists $x, y \in S_\mathbb{X}$ satisfying\\
 $(i)$ $x\in M_{T}$, $(ii)$ $y\perp_{B} x$, $(iii)$ $Ty\neq 0$. Then $T$ can not be left symmetric.
\end{theorem}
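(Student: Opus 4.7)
The plan is to construct explicitly an operator $A \in \mathbb{L}(\mathbb{X})$ that witnesses the failure of left symmetry of $T$, i.e., an $A$ such that $T \perp_B A$ but $A \not\perp_B T$. The natural candidate, guided by the real case proof in \cite{SGP}, is the rank one operator $A(v) = f_y(v)\,Ty$, where $f_y \in \mathbb{X}^*$ is the unique norm-one support functional at $y$, whose existence is guaranteed by smoothness of $\mathbb{X}$.

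First, I would establish two structural facts about $A$. By smoothness of $\mathbb{X}$, the assumption $y \perp_B x$ (in the complex sense) forces $f_y(x) = 0$, and hence $Ax = 0$; this follows by computing the one-sided derivatives of $t \mapsto \|y + t\lambda x\|$ at $t = 0$ for $\lambda \in \{1, -1, i, -i\}$, exactly as in the smoothness argument of Lemma 2.1 of \cite{SGP}. Second, since $Ay = Ty \neq 0$ by hypothesis $(iii)$, we have $\|A\| = \|f_y\|\,\|Ty\| = \|Ty\| > 0$, and $M_A$ consists precisely of those $v \in S_{\mathbb{X}}$ satisfying $|f_y(v)| = 1$; strict convexity of $\mathbb{X}$ then pins this set down to exactly $\{\mu y : \mu \in \mathbb{C},\ |\mu| = 1\}$.

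The verification that $T \perp_B A$ is now immediate: since $x \in M_T$ and $Ax = 0$, for every $\lambda \in \mathbb{C}$ we have $\|T + \lambda A\| \geq \|(T + \lambda A)x\| = \|Tx\| = \|T\|$. The real substance lies in showing $A \not\perp_B T$. By Corollary \ref{corollary:ortho complex}, if $A \perp_B T$ held, then for every $\alpha \in U$ there would exist $u, v \in M_A$ with $Tu \in (Au)_\alpha^{+}$ and $Tv \in (Av)_\alpha^{-}$. Writing $u = \mu_1 y$ and $v = \mu_2 y$, one computes $Tu = \mu_1 Ty = Au$ and $Tv = \mu_2 Ty = Av$, so these two conditions collapse respectively to $|1 + t\alpha| \geq 1$ for all $t \geq 0$ and for all $t \leq 0$, that is, to $\mathrm{Re}(\alpha) \geq 0$ and $\mathrm{Re}(\alpha) \leq 0$ simultaneously. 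Choosing $\alpha = 1 \in U$ makes the second requirement fail, and hence $A \not\perp_B T$.

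The main obstacle I anticipate is the careful justification of the two preliminary facts in the complex setting: translating $y \perp_B x$ into $f_y(x) = 0$ (which requires exploiting that the complex B-J orthogonality quantifies over all $\lambda \in \mathbb{C}$, not just real scalars), and nailing down $M_A$ to a single circle through $y$ using strict convexity. Once these are in place, the asymmetry of the construction---namely, that $x$ annihilates $A$ while every $u \in M_A$ satisfies $Au = Tu$---produces the contradiction through Corollary \ref{corollary:ortho complex} with virtually no further work.
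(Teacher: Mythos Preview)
Your proof is correct and is precisely the adaptation of the real-case argument from \cite{SGP} that the paper points to; indeed, the paper does not write out a proof of Theorem~\ref{theorem:left2} at all, but merely states that ``following the same line of arguments, these results can be proved for complex Banach spaces.'' Your construction of $A = f_y(\cdot)\,Ty$, the identification $M_A = \{\mu y : |\mu| = 1\}$ via strict convexity, and the use of Corollary~\ref{corollary:ortho complex} to conclude $A \not\perp_B T$ from $Au = Tu$ on $M_A$ constitute exactly the expected complex analogue.
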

In \cite{GSP}, it was proved that if $T$ is a compact linear operator on a real Hilbert space $\mathbb{H}$ then $T$ is left symmetric if and only if $T$ is the zero operator. Sain proved in \cite{S} that if $T \in \ell_p^2(\mathbb{R}) , 1 < p < \infty,$ then $T$ is left symmetric if and only if $T$ is the zero operator. The situation is a little bit complicated when we consider $ \ell_p^2(\mathbb{C})$ instead of $\ell_p^2(\mathbb{R})$. We first find the symmetric points of $ \ell_p^2(\mathbb{C})$ in the following propositions. We would like to mention that the proofs of the propositions follow from elementary calculations in each case.  

\begin{prop}
Let $\mathbb{X}=\ell_{p}^{2}(\mathbb{C}),1 < p <  \infty$. If $(z_{1},z_{2}) \in S_{\mathbb{X}}$ then \\
(i) $(z_{1},z_{2})\perp_{B} (1,-\frac{|z_{1}|^{p-2}\bar{z_{1}}}{|z_{2}|^{p-2}\bar{z_{2}}})$, where  $ z_{2} \neq 0.$ \\
(ii)  $(z_{1},z_{2})\perp_{B} (-\frac{|z_{2}|^{p-2}\bar{z_{2}}}{|z_{1}|^{p-2}\bar{z_{1}}}, 1)$, where  $ z_{1} \neq 0.$  
\end{prop}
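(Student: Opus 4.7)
The plan is to exploit the standard Hahn--Banach (James) characterization of Birkhoff--James orthogonality: in a (complex) Banach space $\mathbb{X}$, one has $u \perp_B v$ if and only if there exists a norm-one functional $f \in \mathbb{X}^{*}$ with $f(u) = \|u\|$ and $f(v) = 0$. So everything reduces to exhibiting an explicit supporting functional at $(z_1,z_2)$ that annihilates the claimed orthogonal vector, using the duality $(\ell_p^2(\mathbb{C}))^{*} \cong \ell_q^2(\mathbb{C})$ with $\tfrac{1}{p}+\tfrac{1}{q}=1$.

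First I would define the linear functional
\[
f(w_1,w_2) \;=\; |z_1|^{p-2}\bar z_1 \,w_1 \;+\; |z_2|^{p-2}\bar z_2 \,w_2,
\]
with the convention that a term vanishes if the corresponding $z_j$ is zero. Evaluating at $(z_1,z_2)$ gives $f(z_1,z_2) = |z_1|^p + |z_2|^p = \|(z_1,z_2)\|_p^{\,p} = 1$, while the $\ell_q$-norm of $f$ is
\[
\|f\|_q \;=\; \bigl(|z_1|^{(p-1)q} + |z_2|^{(p-1)q}\bigr)^{1/q} \;=\; \bigl(|z_1|^{p}+|z_2|^{p}\bigr)^{1/q} \;=\; 1,
\]
since $(p-1)q = p$. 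Thus $f$ is a norming functional at $(z_1,z_2)$.

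Next I would verify the two annihilation identities. For (i), with $z_2 \neq 0$,
\[
f\!\left(1,\,-\tfrac{|z_1|^{p-2}\bar z_1}{|z_2|^{p-2}\bar z_2}\right) \;=\; |z_1|^{p-2}\bar z_1 \;-\; |z_2|^{p-2}\bar z_2 \cdot \tfrac{|z_1|^{p-2}\bar z_1}{|z_2|^{p-2}\bar z_2} \;=\; 0,
\]
and for (ii), with $z_1 \neq 0$, the symmetric computation gives $f\bigl(-\tfrac{|z_2|^{p-2}\bar z_2}{|z_1|^{p-2}\bar z_1},1\bigr) = 0$. Invoking the James characterization then yields the desired Birkhoff--James orthogonality in both cases.

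The only mildly subtle point, which I would address briefly, is the validity of James's characterization in the complex setting: for any complex Banach space and any $u,v$, the existence of a norm-one $f \in \mathbb{X}^{*}$ with $f(u)=\|u\|$ and $f(v)=0$ is equivalent to $u \perp_B v$ (one direction is Hahn--Banach applied to the real hyperplane condition; the other is immediate from $|f(u+\lambda v)| = |f(u)| = \|u\|$ for all $\lambda \in \mathbb{C}$). Apart from this standard fact, the whole argument is a direct computation, so I do not expect any real obstacle; the main thing to get right is the exponent arithmetic $(p-1)q = p$ which makes $f$ have unit dual norm exactly when $(z_1,z_2) \in S_{\mathbb{X}}$.
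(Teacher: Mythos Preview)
Your argument is correct. The paper's proof is a one-liner invoking a different (though closely related) characterization: it computes the G\^ateaux derivative of the $\ell_p$-norm at $(z_1,z_2)$ in the direction of the proposed vector and observes that it vanishes, which in a smooth space is equivalent to Birkhoff--James orthogonality. Your route instead uses James's supporting-functional criterion directly, exhibiting the explicit dual element $f=(|z_1|^{p-2}\bar z_1,\,|z_2|^{p-2}\bar z_2)\in\ell_q^2(\mathbb{C})$ and checking both that it norms $(z_1,z_2)$ and that it kills the second vector. The two approaches coincide in content---in a smooth space the unique supporting functional \emph{is} the G\^ateaux derivative of the norm---but your presentation is more self-contained: it avoids any appeal to differentiability and reduces the whole thing to H\"older's equality case plus one algebraic cancellation, whereas the paper leaves the derivative computation to the reader. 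Either way the verification is elementary; your exponent bookkeeping $(p-1)q=p$ is exactly the point that makes $\|f\|_q=1$.
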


\begin{proof}
	$(i)$ Let $z_2 \neq 0$. Then it is easy to see from elementary calculations that the Gateaux derivative $G((z_{1},z_{2}),(1,-\dfrac{|z_{1}|^{p-2}\bar{z_{1}}}{|z_{2}|^{p-2}\bar{z_{2}}}))=0$. Therefore, $(z_{1},z_{2})\perp_{B} (1,-\frac{|z_{1}|^{p-2}\bar{z_{1}}}{|z_{2}|^{p-2}\bar{z_{2}}})$, where  $ z_{2} \neq 0.$ \\
	$(ii)$ Follows similarly as $(i)$.
\end{proof}

\begin{prop}
Let $\mathbb{X}=\ell_{p}^{2}(\mathbb{C}), 1 < p <  \infty$. If $ x, y\in S_{\mathbb{X}}$ are such that $x\perp_{B}y $ and $y\perp_{B}x, $ then either of the following is true:
  	
  	(i) $x=(e^{i\theta_{1}},0)$ and $y= (0,e^{i\theta_{2}})$, where $\theta_i \in [0, 2\pi), i=1,2$.
  	
  	(ii) $x= (0,e^{i\theta_{1}})$ and $y=(e^{i\theta_{2}},0)$, where $\theta_i \in [0, 2\pi), i=1,2$.
  	
  	(iii) $x=(\dfrac{1}{2^{1/p}} e^{i\theta_{1}},\dfrac{1}{2^{1/p}} e^{i\theta_{2}})$ and $y= (\dfrac{1}{2^{1/p}} e^{i\theta_{1}},-\dfrac{1}{2^{1/p}} e^{i\theta_{2}})$, where $\theta_i \in [0, 2\pi), i=1,2$.
  	
  	(iv) $x=(\dfrac{1}{2^{1/p}} e^{i\theta_{1}},-\dfrac{1}{2^{1/p}} e^{i\theta_{2}})$ and $y= (\dfrac{1}{2^{1/p}} e^{i\theta_{1}},\dfrac{1}{2^{1/p}} e^{i\theta_{2}})$, where $\theta_i \in [0, 2\pi), i=1,2$.
\end{prop}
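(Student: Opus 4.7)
The plan is to reduce the mutual B-J orthogonality condition to the characterization given in the preceding proposition, exploiting the fact that $\ell_p^2(\mathbb{C})$ is smooth for $1 < p < \infty$. Smoothness tells us that for any non-zero $x$, the set $\{y : x \perp_B y\}$ is a single one-dimensional complex subspace, so the coupled conditions $x \perp_B y$ and $y \perp_B x$ translate into explicit algebraic equations for the coordinates of $x$ and $y$. First I split on whether $x$ has a zero coordinate: if, say, $x = (0, e^{i\theta_1})$, then the orthogonal direction is the first coordinate axis, forcing $y = (e^{i\theta_2}, 0)$ with the reverse orthogonality automatic, which yields form (ii); the analogous subcase $x = (e^{i\theta_1}, 0)$ gives form (i).

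Next, I treat the case where both coordinates $z_1, z_2$ of $x$ are non-zero. By the preceding proposition, $y$ must be a complex scalar multiple of $(|z_2|^{p-2}\bar{z_2},\, -|z_1|^{p-2}\bar{z_1})$, so both coordinates of $y$ are also non-zero. Applying the same proposition with the roles of $x$ and $y$ reversed forces $x$ to be a complex multiple of $(|y_2|^{p-2}\bar{y_2},\, -|y_1|^{p-2}\bar{y_1})$. Substituting this expression for $y$ and comparing the ratio of the two components yields
\[ |z_1|^{p(p-2)} = |z_2|^{p(p-2)}. \]
Assuming $p \neq 2$, this gives $|z_1| = |z_2|$, and combining with the unit-norm constraint $|z_1|^p + |z_2|^p = 1$ we obtain $|z_1| = |z_2| = 2^{-1/p}$. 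Writing $z_k = 2^{-1/p} e^{i\theta_k}$ and normalizing $y$ to have unit norm, a direct computation gives $y = 2^{-1/p}\, e^{i\phi}(e^{-i\theta_2},\, -e^{-i\theta_1})$ for some real $\phi$; after absorbing this global phase into the parameters $\theta_1, \theta_2$ (and relabelling which of the two vectors is called $x$ and which is called $y$), the resulting forms match (iii) and (iv).

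The principal technical difficulty is the algebraic bookkeeping needed to extract the equation $|z_1|^{p(p-2)} = |z_2|^{p(p-2)}$ from the dual orthogonality relation, as the substitution back into the formulas of the preceding proposition produces a cascade of complex moduli and arguments that must be simplified carefully. A secondary point worth flagging is that $p = 2$ is genuinely exceptional: in the Hilbert space $\ell_2^2(\mathbb{C})$ every orthonormal pair is mutually B-J orthogonal, so the proposition tacitly uses $p \neq 2$ in the non-zero-coordinate case.
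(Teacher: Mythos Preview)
The paper does not supply a proof of this proposition; it only remarks beforehand that ``the proofs of the propositions follow from elementary calculations in each case.'' Your approach---using smoothness of $\ell_p^2(\mathbb{C})$ to pin down the unique orthogonal direction via the preceding proposition, then feeding the dual relation back in to force $|z_1|^{p(p-2)}=|z_2|^{p(p-2)}$---is precisely the kind of elementary computation the authors have in mind, so there is nothing to compare against beyond noting that your outline is a faithful fleshing-out of their one-line claim.

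Two further remarks are worth recording. First, your observation that $p=2$ is exceptional is correct and is not mentioned in the paper: in $\ell_2^2(\mathbb{C})$ every orthonormal pair is mutually B-J orthogonal, so the proposition as literally stated fails there. Second, your final step of ``absorbing the global phase'' to land exactly in forms (iii)/(iv) does not go through if the $\theta_1,\theta_2$ appearing in $x$ and $y$ are read as the \emph{same} parameters (as the notation suggests): for instance $x=(2^{-1/p},2^{-1/p})$ and $y=(2^{-1/p}i,-2^{-1/p}i)$ are mutually B-J orthogonal yet match neither (iii) nor (iv) for any common choice of $\theta_1,\theta_2$. This seems to be a looseness in the statement of the proposition rather than a defect in your argument; the substantive conclusion you establish---that in the non-degenerate case both coordinates of $x$ (and of $y$) have modulus $2^{-1/p}$---is correct and is all that the paper actually uses in the sequel.
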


\begin{prop}\label{prop:left}
	Let $\mathbb{X}$ be  the two-dimensional complex Banach space $l_{p}^{2}(\mathbb{C}), 1< p < \infty$. Then $x\in S_{\mathbb{X}}$ is a left symmetric point in $\mathbb{X}$ if and only if 
	
	$x\in \{(e^{i\theta_{1}},0),(0,e^{i\theta_{2}}),(\dfrac{1}{2^{1/p}} e^{i\theta_{3}},\dfrac{1}{2^{1/p}} e^{i\theta_{4}}),(\dfrac{1}{2^{1/p}} e^{i\theta_{3}},-\dfrac{1}{2^{1/p}} e^{i\theta_{4}}) \}$, where $\theta_i \in [0, 2\pi), i=1,2,3,4$.
\end{prop}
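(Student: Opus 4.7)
The plan is to exploit the two preceding propositions and the fact that $\ell_p^2(\mathbb{C})$ is smooth for $1<p<\infty$. Smoothness gives us, for each $x=(z_1,z_2)\in S_{\mathbb{X}}$, a unique norming functional
\[ f_x(w_1,w_2) = |z_1|^{p-2}\bar z_1 w_1 + |z_2|^{p-2}\bar z_2 w_2, \]
and, as a consequence, $x\perp_B y$ if and only if $f_x(y)=0$. The characterization of left symmetry then reduces to the following: $x$ is left symmetric iff every $y$ in the complex line $\ker f_x$ satisfies $f_y(x)=0$. Since $\dim_{\mathbb{C}}\ker f_x=1$ it is enough to test this on one representative, namely (when $z_1z_2\neq 0$) the vector $y_0=(1,-|z_1|^{p-2}\bar z_1/|z_2|^{p-2}\bar z_2)$ already used in the preceding proposition.

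For the sufficient direction, I would check each of the four candidate forms separately. If $x=(e^{i\theta_1},0)$, then $f_x(y)=0$ forces $y_1=0$; writing $y=(0,y_2)$ the functional $f_y$ has zero first coordinate and hence $f_y(x)=0$, so $y\perp_B x$. The case $x=(0,e^{i\theta_2})$ is symmetric. For $x=(2^{-1/p}e^{i\theta_3},\pm 2^{-1/p}e^{i\theta_4})$, the equation $f_x(y)=0$ forces $|y_1|=|y_2|$, and a direct substitution into $f_y(x)$ shows the two summands cancel (this is exactly the computation that underlies the pair $(x,y)$ being of type (iii) or (iv) in the previous proposition). So every $y\perp_B x$ also satisfies $y\perp_B x$ in reverse, and $x$ is left symmetric.

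For the necessary direction, assume $x=(z_1,z_2)\in S_{\mathbb{X}}$ is left symmetric. If either coordinate vanishes we immediately land in one of the first two forms. Otherwise $z_1z_2\neq 0$ and we invoke the preceding proposition: every pair $(x,y)$ with $x\perp_B y$ and $y\perp_B x$ must be of one of the four listed types. Since $x$ has two nonzero coordinates, only types (iii) and (iv) are available, which forces $|z_1|=|z_2|=2^{-1/p}$ and puts $x$ in the remaining two forms. To apply this we pick $y=y_0$ as above (any nonzero element of $\ker f_x$ works), note $x\perp_B y_0$ by the previous proposition, and conclude $y_0\perp_B x$ by the left symmetry assumption.

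The only genuinely computational step is the explicit evaluation $f_{y_0}(x)=z_1\bigl(1-(|z_1|/|z_2|)^{p(p-2)}\bigr)$, which shows that for $p\neq 2$ one really does need $|z_1|=|z_2|$; this is the quantitative heart of the argument and the step I expect to be the main obstacle to write cleanly, although it is entirely mechanical once the norming functional is in hand. Everything else follows from cleanly organizing the cases and quoting the two propositions proved immediately above.
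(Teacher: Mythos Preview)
The paper offers no proof of this proposition; it merely remarks that the three propositions preceding the final theorem ``follow from elementary calculations in each case.'' Your outline supplies exactly those calculations, organized cleanly around the norming functional $f_x$, and is correct: the sufficient direction is a direct check, and for the necessary direction either your explicit formula $f_{y_0}(x)=z_1\bigl(1-(|z_1|/|z_2|)^{p(p-2)}\bigr)$ or an appeal to the previous proposition on mutually orthogonal unit pairs immediately forces $|z_1|=|z_2|$. Your caveat about $p=2$ is well placed---there the exponent $p(p-2)$ vanishes, every unit vector is left symmetric, and the stated ``only if'' direction fails; this restriction is not made explicit in the paper.
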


\begin{theorem}
	Let $\mathbb{X}=\ell_{p}^{2}(\mathbb{C}),1 < p <  \infty$. Then $T\in \mathbb{L}({\mathbb{X}})$ is left symmetric if and only if $T$ is the zero operator.
\end{theorem}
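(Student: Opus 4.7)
The ``if'' direction is immediate: $0 \perp_B A$ is vacuous and $A \perp_B 0$ always holds. For the converse, suppose $T \in \mathbb{L}(\mathbb{X})$ is nonzero and left symmetric, and I plan to derive a contradiction. As a first step, I would pick $x \in M_T$ (nonempty by finite-dimensionality) and apply Theorem~\ref{theorem:left2} contrapositively to obtain $Ty = 0$ for every $y$ with $y \perp_B x$. A short Gateaux-derivative computation identifies this orthogonality set with the complex line $\ker \phi_x$, where $\phi_x = (|x_1|^{p-2}\bar{x_1},\,|x_2|^{p-2}\bar{x_2})$ is the support functional at $x$; hence $\ker T \supseteq \ker \phi_x$, forcing $T = \phi_x \otimes v$ for some nonzero $v$, with $M_T = \{e^{i\theta}x : \theta \in [0, 2\pi)\}$.

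Next, I would restrict $x$ and $v$ by testing left symmetry against rank-one perturbations. For $A = \phi_x \otimes u$, the identity $T + \lambda A = \phi_x \otimes (v + \lambda u)$ turns the implication $T \perp_B A \Rightarrow A \perp_B T$ into $v \perp_B u \Rightarrow u \perp_B v$, so $v/\|v\|$ must be a left symmetric point of $\mathbb{X}$. Taking $A = \chi \otimes v$ analogously forces $\phi_x$ to be left symmetric in the dual $\ell_q^2(\mathbb{C})$. Proposition~\ref{prop:left}, applied in both spaces, then confines $x/\|x\|$ and $v/\|v\|$ (modulo the coordinate-swap and coordinate-wise unimodular isometries of $\mathbb{X}$) to the four representatives $\{e_1,\, e_2,\, 2^{-1/p}(1,1),\, 2^{-1/p}(1,-1)\}$, leaving only finitely many candidate pairs $(x,v)$.

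Finally, for each surviving pair I would exhibit a rank-two $A$ witnessing $T \perp_B A$ but $A \not\perp_B T$. By Theorem~\ref{theorem:reflexive}, $T \perp_B A$ is equivalent to $v \perp_B Ax$, whereas $A \perp_B T$ reduces to the alternative that either $\phi_x$ vanishes at a unit norming vector $z_A$ of $A$, or $Az_A \perp_B v$. The construction prescribes the first column of $A$ so that $v \perp_B Ax$ and then adjusts the second column so that $\phi_x(z_A) \neq 0$ and $Az_A \not\perp_B v$; in the most delicate case $x = v = 2^{-1/p}(1,1)$ the operator $A = \bigl(\begin{smallmatrix}1 & -2 \\ 1 & \phantom{-}0\end{smallmatrix}\bigr)$ realizes this plan, with its (essentially unique) norming vector lying in an off-diagonal real quadrant, and the remaining cases are dispatched by analogous rank-two constructions.

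The main obstacle is precisely this last step: for each candidate pair one has to locate the norming vector of the chosen perturbation (characterized only implicitly by a nonlinear Lagrange system without a closed form for generic $p \in (1,\infty)$) and verify that the two orthogonality conditions indeed fail simultaneously. The verification is elementary and qualitatively robust in each instance, but a uniform treatment seems elusive, so the argument is inherently a finite case analysis that, once completed, supplies the required contradiction.
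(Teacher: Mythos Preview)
Your overall architecture coincides with the paper's: reduce $T$ to a rank-one operator, pin its data down to the finite list of left symmetric points of Proposition~\ref{prop:left}, and then eliminate each candidate with an explicit $A$ satisfying $T\perp_B A$ but $A\not\perp_B T$. The paper constrains $x$ and $Tx$ via Theorems~\ref{theorem:left1}--\ref{theorem:left2} together with a basis argument showing $x\perp_B y$; your rank-one tests $A=\phi_x\otimes u$ and $A=\chi\otimes v$ are a tidier route to the same finite list, and the concluding case analysis is identical in spirit.

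There is, however, a genuine slip in your first step. Theorem~\ref{theorem:left2} gives $Ty=0$ for every $y$ with $y\perp_B x$, and you then assert that ``a short Gateaux-derivative computation identifies this orthogonality set with the complex line $\ker\phi_x$''. That identification is false for generic $x$: $\ker\phi_x=\{y:x\perp_B y\}$, not $\{y:y\perp_B x\}$, and in $\ell_p^2(\mathbb{C})$ with $p\neq 2$ the two complex lines differ unless $x$ already lies in the special set (take e.g.\ $p=4$ and $x$ with $|x_1|\neq|x_2|$, both nonzero). What you can legitimately extract is that some nonzero $y_0$ with $y_0\perp_B x$ lies in $\ker T$, so $T$ is rank one, say $T=\psi\otimes v$; then the hypothesis $x\in M_T$ forces $|\psi(x)|=\|\psi\|$, and smoothness of $\ell_p^2$ makes $\psi$ a scalar multiple of $\phi_x$. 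This is exactly the content of the paper's basis argument establishing $x\perp_B y$, and once it is supplied your remaining steps go through as written.
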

\begin{proof}
	If possible suppose that $T\in \mathbb{L}({\mathbb{X}})$ is a non-zero left symmetric point. Since B-J orthogonality is homogeneous and $T$ is non-zero, we may assume without loss of generality, that $\|T\|=1$. Suppose $T$ attains norm at $x\in S_{\mathbb{X}}$. From Theorem 2.3 of James \cite{J}, it follows that there exists $y\in S_{\mathbb{X}}$  such that $y\perp_{B}x$. Since $\mathbb{X}$ is strictly convex and smooth, applying Theorem \ref{theorem:left2}, we see that $Ty=0$. Theorem \ref{theorem:left1}, ensures that $Tx$ must be a left symmetric point in $\mathbb{X}$. Thus, applying Proposition \ref{prop:left}, we have that
	
	$Tx\in \{(e^{i\theta_{1}},0),(0,e^{i\theta_{2}}),(\dfrac{1}{2^{1/p}} e^{i\theta_{3}},\dfrac{1}{2^{1/p}} e^{i\theta_{4}}),(\dfrac{1}{2^{1/p}} e^{i\theta_{3}},-\dfrac{1}{2^{1/p}} e^{i\theta_{4}}) \}$, where $\theta_i \in [0, 2\pi),~ i=1,2,3,4$.
	
	We claim that $x\perp_{B}y$.
	
	From Theorem 2.3 of James \cite{J}, it follows that there exists $\alpha\in\mathbb{C} $ such that $\alpha y+x\perp_{B}y$. Since $y\perp_{B}x$ and $x, y\neq0$, $\{x,y\}$ is linearly independent and hence $\alpha y+x\neq0$. Consider $ z=\dfrac{\alpha y+x}{\|\alpha y+x\|}$. Now, if $Tz=0$ then $T$ is zero operator. Let $Tz\neq0$. Clearly, $\{y,z\}$ is a basis of $\mathbb{X}$.
	
	Let $\|c_{1}z+c_{2}y\|=1$ for some $c_{1}, c_{2}\in\mathbb{C}$. Then $1=\|c_{1}z+c_{2}y\|=|c_{1}|\|z+\dfrac{c_{2}}{c_{1}}y\|\geq|c_{1}|$. Since $\mathbb{X}$ is strictly convex, $|c_{1}|<1$, if $c_{2}\neq0$. We also have, $\|T(c_{1}z+c_{2}y)\|=\|c_{1}Tz\|\leq\|Tz\|$ and $\|T(c_{1}z+c_{2}y)\|=\|Tz\|$  if and only if $|c_{1}|=1$ and $c_{2}=0$. This shows that $M_{T}=\{e^{i \theta} z: \theta \in [0,2 \pi)\}$.Thus, we must have $x= e^{i \theta}z$ for some $\theta \in [0,2 \pi)$. Hence $x\perp_{B}y$.
	
	Thus, $x, y\in\mathbb{X}$ are such that $x\perp_{B}y$ and $y\perp_{B}x$. Therefore, by Proposition \ref{prop:left}, we see that  we have the following informations about $T$:
	
	$(i)$ $T$ attains norm at $x,~ x\perp_{B}y,~ y\perp_{B}x,~ Ty =0$,~ $Tx$ is left symmetric.
	
	$(ii)$ $ x,~ y,~ Tx\in\{(e^{i\theta_{1}},0),(0,e^{i\theta_{2}}),(\dfrac{1}{2^{1/p}} e^{i\theta_{3}},\dfrac{1}{2^{1/p}} e^{i\theta_{4}}),(\dfrac{1}{2^{1/p}} e^{i\theta_{3}},-\dfrac{1}{2^{1/p}} e^{i\theta_{4}})\}$, where $\theta_i \in [0, 2\pi), i=1,2,3,4$.
	
	In order to prove that $T\in\mathbb{L}({\mathbb{X}})$ is left symmetric if and only if $T$ is the zero operator, we only need to consider 16 different types of operators that satisfy $(i)$ and $(ii)$ and show that none of them is left symmetric.
	
	Let us first consider one such typical linear operator and prove that it is not left symmetric.
	
	Let $T\in\mathbb{L}({\mathbb{X}})$ be defined by $T(e^{i\theta},0)=(e^{i\theta},0),~ T(0,e^{i\theta})=(0,0)$. Define $A\in\mathbb{L}(\mathbb{X})$ by $A(e^{i\theta},0)=(0,e^{i\theta}),~ A(0,e^{i\theta})=(e^{i\theta},e^{i\theta})$. Then $M_{T}= (e^{i\theta_{1}},0)$. Since $T(e^{i\theta_{1}},0)=(e^{i\theta_{1}},0)\perp_{B} (0,e^{i\theta_{1}})=A(e^{i\theta_{1}},0)$, it follows that $T\perp_{B}A$. We claim that $A\not\perp_{B}T$.
	
	Now, $\|A(\dfrac{1}{2^{1/p}} e^{i\theta},\dfrac{1}{2^{1/p}} e^{i\theta})\|^p=\|(\dfrac{1}{2^{1/p}}e^{i\theta},\dfrac{1}{2^{1/p}}e^{i\theta}+\dfrac{1}{2^{1/p}}e^{i\theta})\|^p=\dfrac{1}{2}+\dfrac{1}{2}|1+1|^p=\dfrac{1}{2} +2^{p-1}>2=\|A(0,e^{i\theta_{2}})\|^p$. This proves that $ (e^{i\theta_{1}},0), (0,e^{i\theta_{2}})\notin M_{A}$.
	
	Let $(\beta_{1} ,\beta_{2})\in M_{A}$, where $\beta_{1}=a+ib, \beta_{2}=c+id$. Since $ \|A(\beta_{1} ,\beta_{2})\|^p=|\beta_{2}|^p + |\beta_{1} + \beta_{2}|^p =|c+id|^p + |(a+c)+i(b+d)|^p $, one of the following holds: $(i)$ $a,b,c,d >0$,  $(ii) a,b,c,d<0$, $(iii) a,c>0, b,d<0$, $(iv) a,c<0, b,d>0$. Suppose $\lambda=\alpha t$, where $\alpha=\alpha_{1} + i\alpha_{2}$, with $\alpha \in U$ and $t \in \mathbb{R}$. Now, $ \|A(\beta_{1} ,\beta_{2})+\lambda T(\beta_{1} ,\beta_{2})\|^p=\|(\beta_{2}, \beta_{1}+\beta_{2}), \lambda(\beta_{1},0)\|^p=|\beta_{2}+\lambda\beta_{1}|^P+|\beta_{1}+\beta_{2}|^p =[(c+\alpha_{1}ta-\alpha_{2}tb)^2 +(d+\alpha_{1}tb+\alpha_{2}ta)^2]^\frac{p}{2}+ [(a+c)^2+(b+d)^2]^\frac{p}{2}$. If we take $t < - \frac {2[\alpha_{1}(ac+bd)+\alpha_{2}(ad-bc)]}{|\beta_{1}|^2}$, we have, $ (c+\alpha_{1}ta-\alpha_{2}tb)^2 +(d+\alpha_{1}tb+\alpha_{2}ta)^2 < c^2 + d^2$. Therefore, for $t < - \frac {2[\alpha_{1}(ac+bd)+\alpha_{2}(ad-bc)]}{|\beta_{1}|^2}$, we have, $ \|A(\beta_{1} ,\beta_{2})+\lambda T(\beta_{1} ,\beta_{2})\|^p=\|(\beta_{2}, \beta_{1}+\beta_{2}), \lambda(\beta_{1},0)\|^p=|\beta_{2}+\lambda\beta_{1}|^P+|\beta_{1}+\beta_{2}|^p<|\beta_{2}|^p+|\beta_{1}+\beta_{2}|^p= \|A(\beta_{1} ,\beta_{2})\|^p$.
	
	 So we can find $\lambda= \alpha t, ~\alpha \in U$, with $t < 0$ such that $\|A(\beta_{1} ,\beta_{2})+\lambda T(\beta_{1} ,\beta_{2})\|^p=\|(\beta_{2}, \beta_{1}+\beta_{2}), \lambda(\beta_{1},0)\|^p=|\beta_{2}+\lambda\beta_{1}|^P+|\beta_{1}+\beta_{2}|^p<|\beta_{2}|^p+|\beta_{1}+\beta_{2}|^p$. This proves that for any $w\in M_{A},~ Tw\notin (Aw)_\alpha^{-}$ for each $\alpha \in U$. Applying Corollary \ref{corollary:ortho complex}, it now follows that $A\not\perp_B T$. However, this proves that $T$ is not left symmetric in $\mathbb{L}(\mathbb{X})$, contradicting our initial assumption.
	
	Next, we describe a general method to prove that none among these 16 types of linear operators are left symmetric.
	
	Let $T$ attains norm at $x, x\perp_{B}y, y\perp_{B}x, Ty =0$ and
	
	 $ x,~y,~ Tx\in\{(e^{i\theta_{1}},0),(0,e^{i\theta_{2}}),(\dfrac{1}{2^{1/p}} e^{i\theta_{3}},\dfrac{1}{2^{1/p}} e^{i\theta_{4}}),(\dfrac{1}{2^{1/p}} e^{i\theta_{3}},-\dfrac{1}{2^{1/p}} e^{i\theta_{4}})\}$, where $\theta_i \in [0, 2\pi), i=1,2,3,4$.
	 
	 Define a linear operator $A\in \mathbb{L}(\mathbb{X})$ by $Ax=y,~ Ay=(e^{i\theta},0)$ or $(e^{i\theta},e^{i\theta})$ such that the following two conditions are satisfied:
	 
	 $(i) A$ does not attain its norm at $ x,~  y$.
	 $(ii)Tw\notin (Aw)_\alpha^{-}$ for each $\alpha \in U$ and for any $w\in M_{A}$.
	 
	 Then as before it is easy to see that $T\perp_{B}A$ but $A\not\perp_{B}T$. Thus, $T$ is not left symmetric. This completes the proof of the theorem.
	
\end{proof}

\begin{remark}
	The general method described in the previous theorem to prove that $T \in \mathbb{L}(\ell_p^2({\mathbb{C}}))(1<p < \infty)$ is left symmetric if and only if $T$ is the zero operator has been verified separately in each possible case mentioned in the proof of the theorem. In this paper, only one particular case has been dealt with explicitly. The details have been omitted in other cases since the method remains same in each case.  
\end{remark}
\begin{acknowledgement}
Dr. Debmalya Sain would like to lovingly acknowledge the monumental role played by Shri Sakti Prasad Mishra, a National Award winning teacher from his school Ramakrishna Mission Vidyapith, Purulia, in rightly shaping the philosophies of so many young learners including himself. 

\end{acknowledgement}

\end{document}